\title[Scaffoldings of TP Matrices and Line Insertion]{Scaffoldings of Totally Positive Matrices and Line Insertion}
\newtheorem{thm}{Theorem}[section]
\newtheorem{lem}[thm]{Lemma}
\newtheorem{prop}[thm]{Proposition}
\newtheorem{cor}[thm]{Corollary}
\newtheorem{algorithm}[thm]{Algorithm}
\theoremstyle{definition}
\newtheorem{defn}[thm]{Definition}
\newtheorem{ex}[thm]{Example}
\newtheorem{notn}[thm]{Notation}
\newcommand{\C}{\mathcal}
\newcommand{\vect}{\boldsymbol}
\author{Karel Casteels}
\address{
Department of Mathematics,\newline
University of California, Santa Barbara
}
\email{casteels@ucsb.edu}
\begin{document}

\begin{abstract}

Given a totally positive matrix, can one insert a line (row or column) between two given lines while maintaining total positivity? This question was first posed and solved by Johnson and Smith who gave an algorithm that results in one possible line insertion. In this work we revisit this problem. First we show that every totally positive matrix can be associated to a certain vertex-weighted graph in such a way that the entries of the matrix are equal to sums over certain paths in this graph. We call this graph a scaffolding of the matrix. We then use this to give a complete characterization of all possible line insertions as the strongly positive solutions to a given homogeneous system of linear equations.
	
\end{abstract}

\maketitle
\section{Introduction}

The study of totally positive (TP) and totally nonnegative (TN) matrices, i.e., real matrices where each minor is, respectively, positive and nonnegative, has a long history. See the monographs~\cite{pinkus} and~\cite{tnnbook} for extensive classical theory and applications.

In the mid 2000's, deep connections between the theory of TN matrices and the prime ideal theory of the so-called \emph{algebra of quantum matrices} were discovered (see~\cite{launois} for a survey). Therefore one might hope to bring tools developed in one field to bear on the other. Indeed Cauchon's Deleting Derivations Algorithm of ~\cite{cauchon} developed in the context of quantum algebra was used by Adm et al.~\cite{adm} to study the rank of TN matrices. In the other direction, work of Postnikov~\cite{postnikov} inspired the ``path model'' of quantum matrices in~\cite{casteels}.

This paper continues the theme by transferring the path model of quantum matrices back to the study of TP matrices and to what we will call the \emph{scaffolding} of a matrix. We demonstrate its utility by addressing the problem of inserting a new row or column into a given TP matrix while maintaining total positivity. This is the \emph{Line Insertion Problem}. Johnson and Smith~\cite{johnsonsmith} first proposed and solved this problem by finding an algorithm that always results in a valid line insertion between any given two rows or columns of a given TP matrix. In contrast, the present work will lead to Theorem~\ref{InsertionTheorem} that gives a characterization of the set of possible line insertions as the set of strongly positive solutions to a homogeneous system of linear equations. 

This paper is structured as follows. Section 3 introduces the concept of a scaffolding of a TP matrix $X$ whereby we interpret each entry of $X$ as a sum over paths in a certain grid-like directed graph. We show that every TP matrix has a unique scaffolding and discuss consequences of the Lindstr\"om-Gessel-Viennot Lemma as applied to scaffoldings. We then use these ideas in Section 4 to address the Line Insertion Problem and derive the aforementioned system of equations. Finally we show these equations do indeed have a strongly positive solution, thereby solving the Line Insertion Problem in a new way.

\section{Preliminaries}

For a positive integer $k$, let $[k]=\{1,2,\ldots,k\}$. Unless otherwise noted, the rows and and columns of an $m\times n$ matrix are indexed in the usual way by $[m]$ and $[n]$. We may also index rows and columns using other sets but we trust the reader will easily extend the material below to these situations.

If $A$ is an $m\times n$ matrix and $I$ and $J$ are subsets of the rows and columns of $A$ respectively, then $A[I, J]$ denotes the submatrix of $A$ formed by $I$ and $J$. When $|I|=|J|$, the determinant $\det A[I,J]$ is a \emph{minor} of $A$. 

A matrix is \emph{positive} if all entries are positive. A matrix is \emph{totally positive} (TP) if every minor of that matrix is positive. Of course every TP matrix is positive but not conversely.

Let $A$ be an $m\times n$ matrix. The notation $$A[\{i\cdots\}, \{j\cdots\}]$$ will be shorthand for the submatrix $$A[\{i,i+1,\ldots,i+k\}, \{j,j+1,\ldots, j+k\}]$$ where $k=\min(m-i,n-j).$ In other words it is the largest contiguous submatrix with $(i,j)$ in the top-left corner. Similarly, $$A[\{\cdots i\}, \{\cdots j\}]$$ is  the largest contiguous submatrix with $(i,j)$ in the bottom-right corner. We extend this notation to, for $i_0<i$ and $j_0<j$, setting $$A[\{i_0,i\cdots\}, \{j_0,j\cdots\}] = A[\{i_0,i,i+1,\ldots,i+k\}, \{j_0,j,j+1,\ldots, j+k\}]$$ where $k=\min(m-i,n-j).$ Similarly define $$A[\{\cdots i,i_0\}, \{\cdots j,j_0\}]$$ for $i_0>i$ and $j_0>j$.

It is easily seen that the transpose of a TP matrix is again TP. If $H_k$ denotes the $k\times k$ matrix with a $1$ in the entries $\{(1,k), (2,k-1),\ldots\}$ and $0$ everywhere else, then we also have the following which is immediate from Theorem 1.4.1 in~\cite{tnnbook}.

\begin{prop}\label{AntitransposeProp}
If $A$ is an $m\times n$ TP matrix, then $A^\tau=H_nA^TH_m$ is an $n\times m$ TP matrix.
	
\end{prop}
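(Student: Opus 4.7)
The plan is to unpack the definition of $A^\tau$ entrywise, then show that each minor of $A^\tau$ equals a minor of $A$, whence positivity of the latter implies positivity of the former.

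First I would observe that multiplication by $H_k$ acts as a reversal: $H_m$ on the right of $A^T$ reverses its columns (which are indexed by the rows of $A$), and $H_n$ on the left reverses its rows. Putting this together gives the entrywise formula $(A^\tau)_{i,j} = A_{m-j+1,\, n-i+1}$ for $i \in [n]$ and $j \in [m]$. So $A^\tau$ is obtained from $A$ by transposing and then reversing both the row and column orders.

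Next, for index sets $I = \{i_1 < \cdots < i_k\} \subseteq [n]$ and $J = \{j_1 < \cdots < j_k\} \subseteq [m]$, I would identify the minor $\det A^\tau[I, J]$ with a minor of $A$. Using the entrywise formula, the matrix $A^\tau[I,J]$ has rows that, after being listed in reverse, are a submatrix of $A^T$ with rows indexed by $J' = \{m-j_k+1 < \cdots < m-j_1+1\}$ and columns (after another reversal) indexed by $I' = \{n-i_k+1 < \cdots < n-i_1+1\}$. Reversing the order of $k$ rows introduces a sign of $(-1)^{k(k-1)/2}$, and reversing $k$ columns introduces the same sign, so the two cancel. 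Combined with $\det M = \det M^T$, this yields $\det A^\tau[I,J] = \det A[I',J']$.

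Since $A$ is TP, $\det A[I', J'] > 0$ for every such pair $(I',J')$, and every minor of $A^\tau$ arises this way. Therefore every minor of $A^\tau$ is positive, so $A^\tau$ is TP. The only slightly delicate step is the sign bookkeeping, but as noted it is immediate once one sees that row and column reversals contribute the same factor; the appeal to Theorem 1.4.1 of \cite{tnnbook} presumably just packages this standard computation.
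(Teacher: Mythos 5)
Your argument is correct, and since the paper simply cites Theorem 1.4.1 of \cite{tnnbook} without giving a proof, your self-contained verification is a reasonable way to fill that in. The entrywise identity $(A^\tau)_{i,j}=A_{m-j+1,\,n-i+1}$ is right, and the key observation — that $A^\tau[I,J]$ is, up to transposition and a simultaneous reversal of rows and columns, a $k\times k$ submatrix of $A$, so the two sign factors $(-1)^{k(k-1)/2}$ cancel and $\det M=\det M^T$ does the rest — is exactly the content the cited theorem packages. One small notational slip: with $I\subseteq[n]$, $J\subseteq[m]$, $I'=\{n-i+1:i\in I\}$ and $J'=\{m-j+1:j\in J\}$, the relevant submatrix of $A^T$ has \emph{rows} indexed by $I'$ (not $J'$) and columns by $J'$, so the identity should read $\det A^\tau[I,J]=\det A[J',I']$ rather than $\det A[I',J']$; as written the row and column index sets are on the wrong side and $A[I',J']$ is not even a well-formed submatrix of the $m\times n$ matrix $A$. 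This does not affect the conclusion, since the point is only that every minor of $A^\tau$ equals \emph{some} minor of $A$.
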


The matrix $A^\tau$ may be thought of as the reflection of $A$ across the ``anti-diagonal'' $\{(1,n), (2,n-1),\ldots\}$ and so we will call the map $A\mapsto A^\tau$ the \emph{anti-transpose}. 

Note that because of the transpose (or anti-transpose) map, it suffices to explain how to insert a row in order to solve the Line Insertion Problem.
 
It is convenient to use the language of directed graphs to visualize some of the concepts in this paper. We need nothing beyond the most elementary definitions here, however when we talk about \emph{paths} in a directed graph, we always will mean \emph{directed paths}. We also write $P\in G$ to mean the path $P$ is contained in the directed graph $G$. Additional special notation will be given in Notation~\ref{PathNotation}.

Finally, an $n$-tuple $\vect{v}\in\mathbb{R}^n$ is \emph{strongly positive} if each component of $\vect{v}$ is positive.

\section{Scaffoldings of Totally Positive Matrices}

\subsection{From $\Gamma$-Scaffoldings to Totally Positive Matrices}

The $\Gamma$-scaffolding of an $m\times n$ TP matrix $X$ is defined using a certain vertex-weighted directed graph. Roughly speaking, this graph is an $m\times n$ grid with extra vertices attached to the right and below, one for each row and column, and with horizontal edges oriented ``right to left'' and vertical edges ``top to bottom.'' Figures~\ref{ExampleFigure1} and ~\ref{ExampleFigure2} below are examples of such a graph. 
 
\begin{defn} Let $T=[t_{ij}]$ be an $m\times n$ positive matrix. Define the vertex-weighted directed graph $G^\Gamma_{m,n}(T)$ as follows. The vertex set is the disjoint union $([m]\times [n])\cup [m]\cup [n]$. The vertices $[m]$ are the \emph{row vertices} and the vertices $[n]$ are the \emph{column vertices.}\footnote{We resolve any ambiguity between these labels by explicitly stating the type (row or column) of vertex we mean.}

Next, for each $i\in [m]$, there is a directed edge from row vertex $i$ to the vertex $(i,n)$ and a directed edge from $(i,j)$ to $(i,j-1)$ for each $j\in [n]\setminus \{1\}$. These directed edges will be called \emph{horizontal edges}. Also, for each $j\in [n]$ and $i\in [m-1]$ there is a directed edge from $(i,j)$ to $(i+1,j)$ and a directed edge from $(m,j)$ to column vertex $j$. These directed edges will be called \emph{vertical edges}.

Finally, equip $G^\Gamma_{m,n}(T)$ with the function $w:[m]\times [n]\to \mathbb{R}$ defined by $w(i,j)=t_{ij}$.

\end{defn} 

In drawings of $G^\Gamma_{m,n}(T)$, we will label the internal vertices by their weight.

Suppose we have the graph $G_{m,n}^\Gamma(T)$ and let $P$ be a path in this graph that starts at row vertex $i$ and ends at column vertex $j$. Notice that $P$ is uniquely determined by the sequence of vertices at which it \emph{turns}: either proceeding from a horizontal edge to a vertical edge (\emph{$\Gamma$-turns}), or from a vertical edge to a horizontal edge ({\reflectbox{L}-\emph{turns}). In fact if $$((i,j_1), (i_2,j_1),\ldots, (i_\ell,j))$$ is this sequence of turns, then it alternates between $\Gamma$-turns and $\reflectbox{L}$-turns, starting and ending with a $\Gamma$-turn. Paths are crucial in this work so we here set some notation.

\begin{notn} \label{PathNotation}

With respect to the graph $G^\Gamma_{m,n}(T)$,
\begin{enumerate}
\item A path $P$ starting at vertex $v$ and ending at vertex $w$ will be denoted $P\colon v\to w$. 
\item Paths that start at an internal vertex $(a,b)$ and end at a column vertex will \emph{always be assumed to begin with a vertical edge}. 
\item Let $\ell\in [n]$ be a column index. If the path $P$ starts at row vertex $i$, ends at column vertex $j$ and has its first turn at a vertex $(i,j_1)$ for some $j_1\leq \ell$,  then write $P_{\leq \ell}\colon i\to j.$

\item Let $P\colon i\to j$ be a path with associated sequence of turns $$((i,j_1), (i_2,j_1),\ldots, (i_\ell,j)).$$ Define the \emph{weight} of $P$ to be $$w(P)=t_{ij_1}t_{i_2j_1}^{-1}t_{i_3j_3}\cdots t_{i_{\ell},j_{\ell-1}}^{-1}t_{i_{\ell}j}.$$

\item There exists a unique path $P\colon i\to j$ with exactly one $\Gamma$-turn, and weight $t_{ij}$. We call this path the \emph{primary path} from $i$ to $j$.
\end{enumerate}

\end{notn}

The following definition is crucial to this work.
\begin{defn}

Let $T=[t_{ij}]$ be an $m\times n$ positive matrix. Define the $m\times n$ matrix $X(T)=[x_{ij}]$ by $$x_{ij} = \sum_{\substack{P\colon i\to j,\\P\in G_{m,n}^\Gamma(T)}}w(P).$$ 

We say that $T$ is the \emph{$\Gamma$-scaffolding} of $X(T)$.
\end{defn}

\begin{ex}\label{example1}

Let $$T=\begin{bmatrix} 1 & 3 & 1 \\ 1 & \frac{1}{2} & 1\end{bmatrix}.$$ Then $G^\Gamma_{2,3}(T)$ is illustrated in Figure~\ref{ExampleFigure1} and $$X(T) = \begin{bmatrix} 1 + 3(\frac{1}{2})^{-1}1+1(1)^{-1}1 & 3+1(1)^{-1}\frac{1}{2} & 1 \\ 1 & \frac{1}{2} & 1 \end{bmatrix} = \begin{bmatrix} 8 & \frac{7}{2} & 1\\ 1 & \frac{1}{2} & 1\end{bmatrix}.$$ Notice that $X(T)$ is TP.
 \begin{figure}[ht]
\begin{tikzpicture}[xscale=1.2, yscale=1.2]

\node at (0,0) {$\bullet$};
\node at (0,-0.35) {$1$};

\node at (1,0) {$\bullet$};
\node at (1,-0.35) {$2$};

\node at (2,0) {$\bullet$};
\node at (2,-0.35) {$3$};

\node at (3,2) {$\bullet$};
\node at (3.35,2) {$1$};

\node at (3,1) {$\bullet$};
\node at (3.35,1) {$2$};

\node at (0,1) {$\bullet$};
\node at (-0.15,1.2) {$1$};
\node at (1,1) {$\bullet$};
\node at (0.85,1.3) {$\frac{1}{2}$};
\node at (2,1) {$\bullet$};
\node at (1.85,1.2) {$1$};

\node at (0,2) {$\bullet$};
\node at (-0.15,2.2) {$1$};
\node at (1,2) {$\bullet$};
\node at (0.85,2.2) {$3$};
\node at (2,2) {$\bullet$};
\node at (1.85,2.2) {$1$};

\draw [->, thick, black] (3,2) -- (2.1,2);
\draw [->, thick, black] (2,2) -- (1.1,2);
\draw [->, thick, black] (1,2) -- (0.1,2);


\draw [->, thick, black] (3,1) -- (2.1,1);
\draw [->, thick, black] (2,1) -- (1.1,1);
\draw [->, thick, black] (1,1) -- (0.1,1);

\draw [->, thick, black] (0,2) -- (0,1.1);
\draw [->, thick, black] (0,1) -- (0,0.1);

\draw [->, thick, black] (1,2) -- (1,1.1);
\draw [->, thick, black] (1,1) -- (1,0.1);

\draw [->, thick, black] (2,2) -- (2,1.1);
\draw [->, thick, black] (2,1) -- (2,0.1);

\end{tikzpicture}

\caption{The graph $G^\Gamma_{2,3}(T)$ of Example~\ref{example1}. Internal vertices are labeled by their weights.} \label{ExampleFigure1}
	\end{figure}
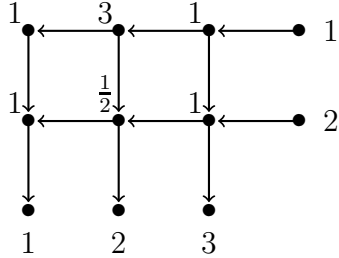

\end{ex}

\begin{ex}\label{3x3Example2}

Let $T=[t_{ij}]$ be a positive $3\times 3$ matrix. Then $G^\Gamma_{3,3}(T)$ is illustrated in Figure~\ref{ExampleFigure2} and $X(T)=[x_{ij}]$ is the $3\times 3$ matrix with
\begin{align*} x_{11} & =
 t_{11}+t_{12}t_{22}^{-1}t_{21} + t_{12}t_{32}^{-1}t_{31} + t_{13}t_{23}^{-1}t_{21}+ t_{13}t_{23}^{-1}t_{22}t_{32}^{-1}t_{31}+t_{13}t_{33}^{-1}t_{31},\\
 x_{12} &= t_{12}+t_{13}t_{23}^{-1}t_{22} + t_{13}t_{33}^{-1}t_{32},\\
 x_{21}& = t_{21}+t_{22}t_{32}^{-1}t_{31} + t_{23}t_{33}^{-1}t_{31},\\
 x_{22}&=t_{22}+t_{23}t_{33}^{-1}t_{32},\\
 x_{ij} &= t_{ij}, \textrm{\quad if $i=3$ or $j=3$.}
 \end{align*}
 
  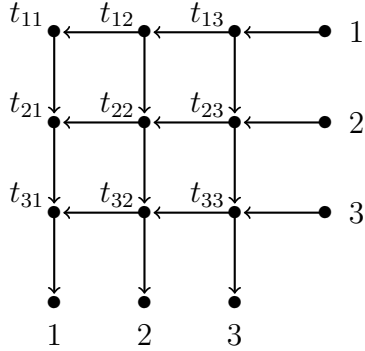
\begin{figure}[ht]
\begin{tikzpicture}[xscale=1.2, yscale=1.2]

\node at (0,0) {$\bullet$};
\node at (0,-0.35) {$1$};

\node at (1,0) {$\bullet$};
\node at (1,-0.35) {$2$};

\node at (2,0) {$\bullet$};
\node at (2,-0.35) {$3$};

\node at (3,3) {$\bullet$};
\node at (3.35,3) {$1$};

\node at (3,2) {$\bullet$};
\node at (3.35,2) {$2$};

\node at (3,1) {$\bullet$};
\node at (3.35,1) {$3$};

\node at (0,1) {$\bullet$};
\node at (-0.3,1.2) {$t_{31}$};
\node at (1,1) {$\bullet$};
\node at (0.7,1.2) {$t_{32}$};
\node at (2,1) {$\bullet$};
\node at (1.7,1.2) {$t_{33}$};

\node at (0,2) {$\bullet$};
\node at (-0.3,2.2) {$t_{21}$};
\node at (1,2) {$\bullet$};
\node at (0.7,2.2) {$t_{22}$};
\node at (2,2) {$\bullet$};
\node at (1.7,2.2) {$t_{23}$};

\node at (0,3) {$\bullet$};
\node at (-0.3,3.2) {$t_{11}$};
\node at (1,3) {$\bullet$};
\node at (0.7,3.2) {$t_{12}$};
\node at (2,3) {$\bullet$};
\node at (1.7,3.2) {$t_{13}$};

\draw [->, thick, black] (3,2) -- (2.1,2);
\draw [->, thick, black] (2,2) -- (1.1,2);
\draw [->, thick, black] (1,2) -- (0.1,2);

\draw [->, thick, black] (3,3) -- (2.1,3);
\draw [->, thick, black] (2,3) -- (1.1,3);
\draw [->, thick, black] (1,3) -- (0.1,3);

\draw [->, thick, black] (3,1) -- (2.1,1);
\draw [->, thick, black] (2,1) -- (1.1,1);
\draw [->, thick, black] (1,1) -- (0.1,1);

\draw [->, thick, black] (0,3) -- (0,2.1);
\draw [->, thick, black] (0,2) -- (0,1.1);
\draw [->, thick, black] (0,1) -- (0,0.1);

\draw [->, thick, black] (1,3) -- (1,2.1);
\draw [->, thick, black] (1,2) -- (1,1.1);
\draw [->, thick, black] (1,1) -- (1,0.1);

\draw [->, thick, black] (2,3) -- (2,2.1);
\draw [->, thick, black] (2,2) -- (2,1.1);
\draw [->, thick, black] (2,1) -- (2,0.1);

\end{tikzpicture}

\caption{The graph $G^\Gamma_{3,3}(T)$.} \label{ExampleFigure2}
	\end{figure}

\end{ex}

\subsection{Minors and $\Gamma$-scaffolding}

When $T$ is a positive matrix, it turns out that $X(T)$ is totally positive. To see why this is, we need a relationship between minors of $X(T)$ and the $\Gamma$-scaffolding $T$. This is provided by the well-known Lindst\"om-Gessel-Viennot Lemma.

To explain, fix a $G^\Gamma_{m,n}(T)$, and let $I=\{i_1<i_2<\cdots <i_k\}$ be a subset of the row vertices and $J=\{j_1<\cdots < j_k\}$ a subset of column vertices with $|I|=|J|$. A \emph{path system from $I$ to $J$} in $G^\Gamma_{m,n}(T)$ is a sequence $\C{P}=(P_1,P_2,\ldots,P_k)$ of paths where $P_\ell\colon i_\ell\to j_\ell$ for each $\ell\in[k]$. We say that $\C{P}$ is \emph{vertex-disjoint} if its paths are mutually vertex-disjoint. Finally, the \emph{weight} of the path system is the product of the weights of its paths, i.e., $$w(\C{P})=w(P_1)w(P_2)\cdots w(P_k).$$ Note that since $T$ is positive, so is $w(\C{P})$.

We may now state the following special case of the Lindst\"om-Gessel-Viennot Lemma (see ~\cite{gesselviennot}).

\begin{lem} \label{lindstrom}
Let $T$ be a positive $m\times n$ matrix and set $X=X(T)$. If $I\subseteq [m]$ and $J\subseteq [n]$ are such that $|I|=|J|$, then $$\det X[I,J] = \sum_{\C{P}}w(\C{P}),$$ where the sum is over all vertex-disjoint path systems from $I$ to $J$ in $G^\Gamma_{m,n}(T).$
\end{lem}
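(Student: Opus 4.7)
The plan is to follow the standard proof of the Lindström--Gessel--Viennot lemma, specialized to the planar graph $G^\Gamma_{m,n}(T)$. First I would expand the minor by the Leibniz formula and then substitute the definition of each matrix entry as a weighted sum over paths:
$$\det X[I,J] = \sum_{\sigma \in S_k} \operatorname{sgn}(\sigma) \prod_{\ell=1}^k x_{i_\ell, j_{\sigma(\ell)}} = \sum_{\sigma \in S_k} \operatorname{sgn}(\sigma) \sum_{\C{P}} w(\C{P}),$$
where the inner sum is over all (not necessarily vertex-disjoint) path systems $\C{P} = (P_1,\ldots,P_k)$ with $P_\ell \colon i_\ell \to j_{\sigma(\ell)}$ in $G^\Gamma_{m,n}(T)$.

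The next step is the classical sign-reversing involution on non-vertex-disjoint path systems. Given such a $\C{P}$, pick the lexicographically smallest pair of indices $\ell < \ell'$ such that $P_\ell$ and $P_{\ell'}$ share a vertex, and let $v$ be the first such shared vertex along $P_\ell$. Swap the sub-paths of $P_\ell$ and $P_{\ell'}$ starting from $v$ to obtain $\C{P}'$. The new system $\C{P}'$ visits exactly the same multiset of vertices as $\C{P}$, so $w(\C{P}') = w(\C{P})$, but its underlying permutation is $\sigma \circ (\ell\;\ell')$, hence of opposite sign. One checks that applying the rule to $\C{P}'$ recovers $\C{P}$, so the contributions of all non-vertex-disjoint path systems cancel in pairs.

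What remains is $\sum_\sigma \operatorname{sgn}(\sigma) \sum_{\C{P}} w(\C{P})$ restricted to vertex-disjoint $\C{P}$, and the final step is to verify that in $G^\Gamma_{m,n}(T)$ any such $\C{P}$ has underlying permutation $\sigma = \operatorname{id}$. This is where planarity enters: the row vertices $i_1 < \cdots < i_k$ sit on the right boundary of the grid from top to bottom, the column vertices $j_1 < \cdots < j_k$ on the bottom boundary from left to right, and every edge points strictly leftward or strictly downward, so the sources and sinks appear on the boundary in non-crossing position. If there were an inversion $\ell < \ell'$ with $\sigma(\ell) > \sigma(\ell')$, then $P_\ell$ together with the right-boundary arc from row vertex $i_\ell$ to the bottom-right corner and the bottom-boundary arc from the corner to column vertex $j_{\sigma(\ell)}$ would bound a region of the rectangle containing row vertex $i_{\ell'}$ but not column vertex $j_{\sigma(\ell')}$, forcing $P_{\ell'}$ to cross $P_\ell$ at some internal vertex by a Jordan-curve argument---contradicting vertex-disjointness. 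Hence only $\sigma = \operatorname{id}$ survives, which gives exactly the stated formula.

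The main obstacle I anticipate is making the Jordan-curve step clean in the finite combinatorial setting, since topological intuition is clear but a line-by-line combinatorial proof requires some bookkeeping. A convenient workaround is to give a direct induction on $k$ (or on the grid size), peeling off the path with terminal column vertex $j_1$ and noting that any path ending at $j_1$ must descend the entire first column from some row downward, so vertex-disjointness forces it to have started at the bottom-most source $i_k$ only when $\sigma(k) = 1$; iterating produces $\sigma = \operatorname{id}$ without invoking topology.
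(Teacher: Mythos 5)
Your proposal gives a self-contained proof of the Lindstr\"om--Gessel--Viennot lemma from scratch, whereas the paper takes the opposite route: it simply cites the standard LGV lemma and then spends its effort on the translation needed to apply it --- namely, converting the vertex-weighted graph $G^\Gamma_{m,n}(T)$ to an equivalent edge-weighted graph (all vertical edges get weight $1$, the edge from row vertex $i$ to $(i,n)$ gets weight $t_{i,n}$, and the edge $(i,j)\to(i,j-1)$ gets weight $t_{i,j-1}t_{i,j}^{-1}$), and noting the resulting edge-weight of a path agrees with the turn-based vertex weight of Notation~\ref{PathNotation}. Your route buys self-containment at the cost of length; the paper's buys brevity by deferring to the literature while still supplying the one nontrivial bookkeeping step that is genuinely specific to this construction. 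That translation is also the clean way to justify your weight-preservation claim in the involution step: you write that $\mathcal{P}'$ ``visits exactly the same multiset of vertices'' and deduce $w(\mathcal{P}')=w(\mathcal{P})$, but the turn-based weight is not a function of the visited vertices alone --- swapping tails can change which vertices are turns. The correct invariant is the multiset of \emph{edges}, which the swap clearly preserves; since the edge-weight and turn-weight coincide under the paper's translation, the total weight is indeed preserved.

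One genuine error: your proposed ``workaround'' induction has the orientation reversed. In $G^\Gamma_{m,n}(T)$ the row vertices appear on the right boundary with $i_1$ topmost and $i_k$ bottommost, and every path moves strictly left or down, so in a vertex-disjoint system the path terminating at the leftmost sink $j_1$ must originate at the \emph{topmost} source $i_1$, not the bottommost source $i_k$. (Concretely, if $P:i_a\to j_1$ and $a>1$, any $P':i_{a'}\to j_{b'}$ with $a'<a$ and $b'>1$ starts above $P$ and ends to its right, forcing a crossing.) Your claim ``$\sigma(k)=1$'' would iterate to the reversing permutation, which has sign $(-1)^{\binom{k}{2}}$, not $+1$, and would break the lemma. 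The correct peeling is $\sigma(1)=1$, then restrict to $I\setminus\{i_1\}$, $J\setminus\{j_1\}$. Your primary Jordan-curve argument in the preceding paragraph is stated correctly, so the proof is recoverable; just discard or flip the workaround.
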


It should be noted that the Lindstr\"om-Gessel-Viennot Lemma is usually stated for edge-weighted directed graphs. The graph $G^{\Gamma}_{m,n}(T)$ can be modified to this setting by defining the edge-weight of all vertical edges to be $1$, the weight of the edge from the row vertex $i$ to $(i,n)$ to be $t_{i,n}$, and the weight of the edge from $(i,j)$ to $(i,j-1)$ to be $t_{i,j-1}t_{i,j}^{-1}.$ With this scheme, it is easy to verify that the edge-weight of a path (being the product of the edge weights) equals the (vertex-) weight of a path as defined in Notation~\ref{PathNotation}.

\begin{ex} 

Referring back to Figure~\ref{3x3Example2} where $T=[t_{ij}]$ is a $3\times 3$ positive matrix and $X=X(T)$, one has, for example, that \begin{align*} \det X[\{1,2\}, \{1,2\}] &= t_{11}\cdot t_{22}+ t_{11}\cdot t_{23}t_{33}^{-1}t_{32} + t_{12}t_{22}^{-1}t_{21}\cdot t_{23}t_{33}^{-1}t_{32}\\
 \det X[\{1,2\}, \{1,3\}] &= t_{11}\cdot t_{23} + t_{12}t_{22}^{-1}t_{21}\cdot t_{23} + t_{12}t_{32}^{-1}t_{31}\cdot t_{23},\end{align*} and $$\det(X) = t_{11}t_{22}t_{33}.$$

\end{ex}

Note that there always exists at least one vertex-disjoint path system from $I$ to $J$ in $G^\Gamma_{m,n}(T)$, namely $\C{P}=(P_1,\ldots,P_k)$ where each $P_\ell$ is the primary path from $i_\ell$ to $j_\ell$. Call this the \emph{primary path system from $I$ to $J$}. The existence of this path system together with Lemma~\ref{lindstrom} has two immediate consequences. The first keeps our earlier promise.

\begin{cor} \label{LindstromCor1}
If $T$ is a positive matrix, then $X(T)$ is totally positive.
	
\end{cor}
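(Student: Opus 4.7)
The plan is to apply Lemma~\ref{lindstrom} directly. For arbitrary index sets $I\subseteq [m]$, $J\subseteq [n]$ with $|I|=|J|=k$, the lemma expresses $\det X(T)[I,J]$ as $\sum_{\mathcal{P}} w(\mathcal{P})$, summed over all vertex-disjoint path systems from $I$ to $J$ in $G^\Gamma_{m,n}(T)$. To conclude total positivity it therefore suffices to argue two things: (i) every summand $w(\mathcal{P})$ is strictly positive, and (ii) the sum is nonempty.

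For (i), I would simply unwind the definition of $w(P)$ given in Notation~\ref{PathNotation}: each path weight is a monomial in the entries of $T$ with integer (possibly negative) exponents. Since $T$ is positive by hypothesis, every factor $t_{ij}^{\pm 1}$ is positive, so $w(P)>0$ for any path $P$, and hence $w(\mathcal{P}) = \prod_\ell w(P_\ell) > 0$ for every vertex-disjoint path system.

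For (ii), I would exhibit a specific vertex-disjoint path system, namely the primary path system introduced immediately before the statement. Given $I=\{i_1<\cdots<i_k\}$ and $J=\{j_1<\cdots<j_k\}$, each primary path $P_\ell$ travels left along row $i_\ell$ from the row vertex to position $(i_\ell, j_\ell)$, turns once, and travels down through column $j_\ell$ to the column vertex. The only verification required is that these $P_\ell$ are pairwise vertex-disjoint; this reduces to a short case check, because for $a<b$ the distinct rows $i_a, i_b$ and distinct columns $j_a, j_b$ mean the only candidate collision is at $(i_b, j_a)$, which lies on the vertical segment of $P_a$ but \emph{not} on the horizontal segment of $P_b$ since $j_a < j_b$ (and symmetrically for the other orientation).

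Combining (i) and (ii), every minor $\det X(T)[I,J]$ is a nonempty sum of strictly positive terms, hence positive, so $X(T)$ is TP. I do not foresee any serious obstacle; essentially all the work has been packaged into Lemma~\ref{lindstrom}, and the only remaining point needing any care is the disjointness of the primary path system, which is forced by the monotonicity of $I$ and $J$.
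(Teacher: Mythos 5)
Your proof is correct and is exactly the paper's argument: apply the Lindstr\"om--Gessel--Viennot expansion from Lemma~\ref{lindstrom}, note that positivity of $T$ makes every summand $w(\mathcal{P})$ positive, and observe that the primary path system guarantees the sum is nonempty. You go slightly further than the paper by actually verifying the vertex-disjointness of the primary path system (which the paper asserts without proof), and your case check is correct.
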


The second corollary is related to certain contiguous minors and will be needed for our work in Section~\ref{InsertionSection}. First, notice that if $$X[I,J]=X[\{i\cdots\}, \{j\cdots\} ]$$ is a contiguous submatrix of $X(T)$, then the primary path system from $I$ to $J$ in $G_{m,n}^\Gamma(T)$ is in fact the \emph{unique} path system from $I$ to $J$. It follows that $$\det X[\{i\cdots\},\{j\cdots\}] = t_{i,j}t_{i+1,j+1}\cdots t_{i+k,j+k},$$ where $k=\min(m-i,n-j).$

In Section~\ref{InsertionSection} we will encounter sums of the form $$\sum_{P_{\leq \ell}\colon i\to j} w(P).$$ We may use Lemma~\ref{lindstrom} to write this quantity in terms of minors of $X(T)$. The paths in this sum may be thought of as exactly those paths from row vertex $i$ to column vertex $j$ that are ``blocked'' by (i.e., are disjoint from) the paths in the primary path system from $\{i+1\cdots\}$ to $\{\ell+1\cdots\}$. See Figure~\ref{VisualProofFigure2}. Given this, the next result follows immediately from Lemma~\ref{lindstrom}.

  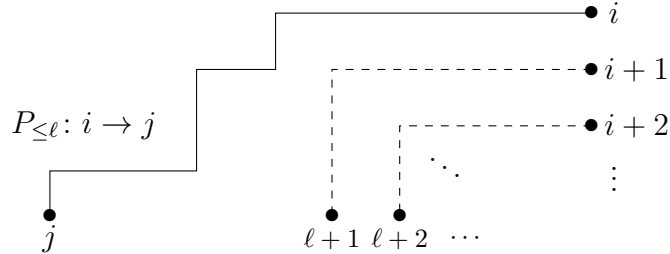
\begin{figure}[ht]
\begin{tikzpicture}[xscale=1.5, yscale=1.5]

\node at (11,2) {$i$};\
\node at (10.8,2) {$\bullet$};
\node at (6.3,1) {$P_{\leq \ell}\colon i\to j$};
\node at (6,0) {$j$};
\node at (6,0.2) {$\bullet$};

\draw (10.8,2) -- (8,2) -- (8,1.5)--(7.3,1.5)--(7.3,0.6)--(6,0.6)--(6,0.2) ;

\draw[dashed] (10.8,1.5)--(8.5,1.5) -- (8.5,0.2);
\draw[dashed] (10.8,1)--(9.1,1) -- (9.1,0.2);
\node at (11.2,1.5) {$i+1$};
\node at (10.8,1.5) {$\bullet$};
\node at (10.8,1) {$\bullet$};
\node at (11.2,1) {$i+2$};
\node at (11,0.6) {$\vdots$};
\node at (9.1,0.2) {$\bullet$};
\node at (8.5,0.2) {$\bullet$};
\node at (8.5,0) {\Small $\ell+1$};
\node at (9.1,0) {\Small $\ell+2$};
\node at (9.7,0) {\Small $\cdots$};

\node at (9.5,0.7) {$\ddots$};
\end{tikzpicture}

\caption{A path $P_{\leq \ell}\colon i\to j$ that is ``blocked'' by the primary path system (dashed) from $\{i+1\cdots\}$ to $\{\ell+1\cdots\}$.} \label{VisualProofFigure2}
	\end{figure}

\begin{cor} \label{LindstromCor2}
Let $T=[t_{ij}]$ be a positive matrix and set $X=X(T)$. Let $i$ be a row vertex and $j,\ell\in [n]$ column vertices with $j\leq \ell$. Then $$\sum_{P_{\leq \ell}\colon i\to j} w(P) = \frac{\det X[\{i,i+1\cdots\},\{j,\ell+1\cdots\}]}{\det X[\{i+1\cdots\},\{ \ell+1\cdots\}]}.$$

\end{cor}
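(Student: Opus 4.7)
The plan is to apply Lemma~\ref{lindstrom} to the numerator, while using the observation immediately preceding the statement that the denominator equals the weight of the (unique) primary path system from $\{i+1\cdots\}$ to $\{\ell+1\cdots\}$.

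First I would set notation: let $I=\{i\}\cup\{i+1\cdots\}$, $J=\{j\}\cup\{\ell+1\cdots\}$, and let $\C{Q}$ denote the primary path system from $\{i+1\cdots\}$ to $\{\ell+1\cdots\}$, so that $w(\C{Q})=\det X[\{i+1\cdots\},\{\ell+1\cdots\}]$. Because $j\leq \ell$ we have $j<\ell+1$, so the increasing orderings of $I$ and $J$ align $i\leftrightarrow j$ and $i+s\leftrightarrow \ell+s$ for $s\geq 1$; this is the pairing used by Lemma~\ref{lindstrom}.

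Next I would classify the vertex-disjoint path systems $\C{P}$ from $I$ to $J$. The sub-paths from $\{i+1\cdots\}$ to $\{\ell+1\cdots\}$ form a vertex-disjoint system for a \emph{contiguous} minor, and so by the observation preceding the corollary they must be exactly $\C{Q}$. Thus the remaining task is to identify which paths $P\colon i\to j$ are vertex-disjoint from $\C{Q}$. Recalling that the primary path from $i+s$ to $\ell+s$ occupies the vertices $(i+s,j')$ for $j'\geq \ell+s$ together with $(i',\ell+s)$ for $i'\geq i+s$, the path $Q_1$ in particular covers every $(i+1,j')$ with $j'\geq \ell+1$. If the first turn of $P$ is at $(i,j_1)$ with $j_1\geq \ell+1$, then $P$ next visits $(i+1,j_1)\in Q_1$, a contradiction. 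Conversely, if $j_1\leq \ell$, then after its first turn $P$ stays in columns $\leq j_1\leq \ell$, while $\C{Q}$ only touches columns $\geq \ell+1$ in rows $\geq i+1$, so $P$ and $\C{Q}$ are disjoint. This is precisely Figure~\ref{VisualProofFigure2}. Hence $P$ is disjoint from $\C{Q}$ if and only if $P=P_{\leq \ell}\colon i\to j$.

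Finally, combining these facts with Lemma~\ref{lindstrom} and the multiplicativity of path-system weights,
$$\det X[\{i,i+1\cdots\},\{j,\ell+1\cdots\}]=\Bigl(\sum_{P_{\leq \ell}\colon i\to j} w(P)\Bigr)\cdot w(\C{Q})=\Bigl(\sum_{P_{\leq \ell}\colon i\to j} w(P)\Bigr)\cdot \det X[\{i+1\cdots\},\{\ell+1\cdots\}].$$
Dividing by the positive denominator yields the claim. The only delicate ingredient is the disjointness analysis of $P$ against $\C{Q}$, but the staircase shape of $\C{Q}$ reduces this to the single check on the column index of the first turn of $P$.
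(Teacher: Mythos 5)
Your proof is correct and follows the same route as the paper, which merely sketches the argument by appeal to Figure~\ref{VisualProofFigure2} and says the result "follows immediately" from Lemma~\ref{lindstrom}. You have simply spelled out the two facts the paper leaves implicit: that any vertex-disjoint path system from $\{i,i+1\cdots\}$ to $\{j,\ell+1\cdots\}$ must contain the primary (and unique vertex-disjoint) system $\C{Q}$ from $\{i+1\cdots\}$ to $\{\ell+1\cdots\}$ as a sub-system, and that a path $P\colon i\to j$ is disjoint from $\C{Q}$ exactly when its first turn lies in a column $\leq \ell$, i.e.\ when $P = P_{\leq\ell}$.
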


It may be worth pointing out that setting $\ell=j$  gives the following formula for the entries of the $\Gamma$-scaffolding of $X=X(T)$ in terms of minors of $X$: $$t_{ij}=\frac{\det X[\{i\cdots\},\{j\cdots\}]}{\det X[\{i+1\cdots\},\{j+1\cdots\}]}.$$ 

\subsection{From Totally Positive Matrices to $\Gamma$-Scaffoldings}

Corollary~\ref{LindstromCor1} begs the question: does every totally positive matrix $X$ have a $\Gamma$-scaffolding $T$, i.e., a positive matrix $T$ with $X=X(T)$? The answer is yes.

One may find the $\Gamma$-scaffolding of $X$ using the next procedure. To explain, totally order $[m]\times [n]$ using the reverse lexicographic order $\prec$, that is, $(i,j)\prec (k,\ell)$ if either $i>k$, or $i=k$ and $j>\ell$. If $(i,j)\in [m]\times [n]$, then set $(i,j)^+$ to be the next largest element in this order.

\begin{algorithm}[Cauchon's  Algorithm \cite{cauchon}]
Let $X$ be an $m\times n$ TP matrix. 

\begin{enumerate}

\item Set $X^{(m,n)}=X$. 
\item Suppose $X^{(i,j)}=[x^{(i,j)}_{k\ell}]$ has been defined.
If $(i,j)\prec (1,1)$, then
set $$x_{k\ell}^{(i,j)^+} = \begin{cases} x_{k,\ell}^{(i,j)}-x_{kj}^{(i,j)}\left(x_{ij}^{(i,j)}\right)^{-1}x_{i\ell}^{(i,j)},  & \textrm{if $k<i$ and $\ell<j$,} \\ x_{k\ell}^{(i,j)}, & \textrm{ otherwise.}\end{cases}$$

\item Set $T=X^{(1,1)}$.

\end{enumerate}
\end{algorithm}

\begin{ex}

For $$X = X^{(2,3)}=\begin{bmatrix} 8 & \frac{7}{2} & 1\\ 1 & \frac{1}{2} & 1\end{bmatrix},$$ there are effectively only two steps in Cauchon's Algorithm. The first step results in $$X^{(2,2)} = \begin{bmatrix} 8 - 1(1)^{-1}1 & \frac{7}{2} - 1( 1)^{-1}\frac{1}{2}& 1\\ 1 & \frac{1}{2} & 1\end{bmatrix} = \begin{bmatrix} 7 & 3 & 1\\ 1 & \frac{1}{2} & 1\end{bmatrix}.$$
Then $$X^{(2,1)} = \begin{bmatrix} 7-3\left(\frac{1}{2}\right)^{-1} 1 & 3 & 1\\ 1 & \frac{1}{2} & 1\end{bmatrix} = \begin{bmatrix} 1& 3 & 1\\ 1 & \frac{1}{2} & 1\end{bmatrix}.$$

The remainder of the steps do not change this matrix and so the output $T=X^{(2,1)}$. Notice this is the same $T$ that began Example~\ref{example1}.

\end{ex}

In general, if $i=1$ or $j=1$, then one has $X^{(i,j)^+}=X^{(i,j)}$ and so these steps may be skipped. It follows that for $m\geq 2$, $T=X^{(2,1)}$. 

Of course Cauchon's Algorithm is not \emph{a priori} sensible since the $(i,j)$-entry of $X^{(i,j)}$ could have ended up as zero. Fortunately this never happens, and in fact we have the following. 

\begin{thm}[\cite{adm2}, Theorem 3.3] \label{CauchonTheorem}
If $X$ is a TP matrix, then every step of Cauchon's Algorithm produces a positive matrix.  Moreover, the entries of $X^{(i,j)}$ in positions $\{(1,1)\succ(1,2)\succ\cdots\succ (i,j)\}$ form a \emph{partial} TP matrix, i.e., all minors that are completely determined by these coordinates are positive.
\end{thm}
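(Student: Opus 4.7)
The plan is to induct on the step of Cauchon's Algorithm, with $(i,j)$ descending in the reverse lex order. The base case $X^{(m,n)}=X$ is TP by hypothesis, so every minor supported on the full staircase $\{(1,1),\ldots,(m,n)\}$ is positive. For the inductive step, assume $X^{(i,j)}$ is positive and that every minor of $X^{(i,j)}$ supported on the staircase $\{(1,1),\ldots,(i,j)\}$ is positive; I must verify the same for $X^{(i,j)^+}$ on the smaller staircase.

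Positivity of the updated entries is the easy part. Rewriting the update rule gives
\begin{equation*}
x_{k\ell}^{(i,j)^+}\cdot x_{ij}^{(i,j)} \;=\; \det X^{(i,j)}\bigl[\{k,i\},\{\ell,j\}\bigr], \qquad k<i,\,\ell<j.
\end{equation*}
Since the four positions $(k,\ell),(k,j),(i,\ell),(i,j)$ all lie in the staircase $\{(1,1),\ldots,(i,j)\}$, the inductive hypothesis forces both the $2\times 2$ minor on the right and the divisor $x_{ij}^{(i,j)}$ to be positive, so $x_{k\ell}^{(i,j)^+}>0$. Entries outside the block $k<i,\,\ell<j$ are unchanged and remain positive.

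The real obstacle is preserving the partial TP property. The key observation is that the Cauchon update, viewed as an operation on the top-left $(i-1)\times(j-1)$ block, coincides with the Schur complement of the entry $d=x_{ij}^{(i,j)}$ in the top-left $i\times j$ block $Y=\bigl(\begin{smallmatrix}A&\tilde B\\ \tilde C&d\end{smallmatrix}\bigr)$ of $X^{(i,j)}$: the updated block equals $A-\tilde B\, d^{-1}\tilde C$. The classical Schur-complement-to-minor identity
\begin{equation*}
\det\bigl(A-\tilde B\, d^{-1}\tilde C\bigr)[K,L]=\det Y\bigl[K\cup\{i\},L\cup\{j\}\bigr]/d
\end{equation*}
then expresses each minor of this block as the quotient of a positive minor of $Y$ (in the current staircase) by the positive divisor $d$, and hence is positive. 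To handle staircase minors of $X^{(i,j)^+}$ that extend beyond this block, I would exploit the fact that column $j$ and row $i$ of $X^{(i,j)^+}$ coincide with those of $X^{(i,j)}$: via column and row operations, each such minor can be reduced to a minor of $X^{(i,j)}$ in the old staircase, and hence is positive by induction. The chief difficulty --- and I expect this is where the bulk of the proof in~\cite{adm2} lies --- is the careful case analysis on how a staircase minor's index sets intersect $\{i\}$ and $\{j\}$, together with the algebraic manipulations and sign bookkeeping required to close each case (in particular, the ``mixed'' case where a minor's column indices straddle $j$ without containing it typically demands an auxiliary augmentation step before reducing back to $X^{(i,j)}$).
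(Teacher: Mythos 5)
The paper does not prove this theorem --- it is imported verbatim from \cite{adm2} --- so there is no internal argument to compare against; your attempt has to stand on its own. The inductive framework and the positivity of the individual entries (via the $2\times 2$ identity $x_{k\ell}^{(i,j)^+}x_{ij}^{(i,j)}=\det X^{(i,j)}[\{k,i\},\{\ell,j\}]$) are complete and correct, and so are the staircase-minor cases you actually close: a minor with rows $K\subseteq[i-1]$ and columns $L\subseteq[j-1]$ is the quotient $\det X^{(i,j)}[K\cup\{i\},L\cup\{j\}]/x_{ij}^{(i,j)}$ by the Schur-complement identity, while a minor whose column set contains $j$ (or whose row set contains $i$) is literally equal to the corresponding minor of $X^{(i,j)}$, since within rows $<i$ the update is a column operation subtracting multiples of column $j$, and within columns $<j$ it is a row operation subtracting multiples of row $i$.

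However, the mixed case you explicitly defer is a genuine gap rather than sign bookkeeping. When $K\subseteq[i-1]$, $j\notin L$, and $L$ meets both $[j-1]$ and $\{j+1,\dots,n\}$, the natural augmentation $X^{(i,j)}[K\cup\{i\},L\cup\{j\}]$, after the column operations and a Laplace expansion along row $i$, expresses $x_{ij}^{(i,j)}\det X^{(i,j)^+}[K,L]$ as a \emph{signed} combination of several minors of $X^{(i,j)}$ in the old staircase, and nothing said so far forces those signs to cooperate. Closing this case requires an additional algebraic input --- a Sylvester/Desnanot--Jacobi type determinantal identity, or equivalently an explicit formula for the entries of $X^{(i,j)}$ as ratios of (possibly non-contiguous) minors of the original $X$ --- and that identity, together with its verification, is the technical core of the proof in \cite{adm2}. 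Announcing an ``auxiliary augmentation step'' names the gap without filling it, so as written the proposal is a plausible outline, not a proof.
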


It may be helpful to understand Cauchon's Algorithm as simply a careful reversal of the process of forming $X(T)=[x_{ij}]$ from $T$. Indeed all paths from $k$ to $\ell$ other than the primary path contain at least one $\reflectbox{L}$-turn, and therefore a final $\reflectbox{L}$-turn. Thus we can decompose each sum $x_{k\ell}$ as $$x_{k\ell}=t_{k\ell}+\sum_{(i,j)\in [m]\times [n]} \left(\sum_{P} w(P)\right),$$ where each interior sum is over those paths from row vertex $k$ to column vertex $\ell$ whose last \reflectbox{L}-turn occurs at vertex $(i,j)$. (Many of these interior sums may be 0.) Then a careful analysis reveals that the $(i,j)$-step of Cauchon's Algorithm is \emph{deleting} from each $x_{k\ell}$ the set of paths in $G_{m,n}^\Gamma(T)$ from row vertex $k$ to column vertex $\ell$ whose last $\reflectbox{L}$-turn occurs at $(i,j)$. 

It follows that the intermediate matrices $X^{(i,j)}=[x_{k\ell}^{(i,j)}]$ in Cauchon's Algorithm can be formed from the final output $T=[t_{ij}]$ by $x_{k\ell}^{(i,j)} = \sum_{P\colon k\to \ell}w(P)$ where the sum is over all paths whose \reflectbox{L}-turns occur only at vertices greater than or equal to $(i,j)$ in the reverse lexicographic order. Hence we can state the following.

\begin{cor}
If $X$ is a TP matrix and Cauchon's Algorithm applied to $X$ results in the positive matrix $T$, then $T$ is the $\Gamma$-scaffolding of $X$.
\end{cor}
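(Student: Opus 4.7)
The plan is to show that $X = X(T)$ by defining auxiliary matrices that interpolate in terms of paths between $T$ and $X(T)$ and that satisfy the same recurrence as Cauchon's Algorithm. For each $(i,j) \in [m] \times [n]$, set $\hat{X}^{(i,j)} = [\hat{x}^{(i,j)}_{k\ell}]$ where
$$\hat{x}^{(i,j)}_{k\ell} = \sum_{P\colon k \to \ell} w(P),$$
the sum being over paths in $G^\Gamma_{m,n}(T)$ all of whose \reflectbox{L}-turns occur at vertices $\succeq (i,j)$ in the reverse lexicographic order. No path can have an \reflectbox{L}-turn at $(1,1)$, so $\hat{X}^{(1,1)}$ counts only primary paths and equals $T$. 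At the opposite extreme $\hat{X}^{(m,n)}$ imposes no restriction, so $\hat{X}^{(m,n)} = X(T)$.

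The main step is to verify that the family $\{\hat{X}^{(i,j)}\}$ obeys the Cauchon update rule. Moving from $\hat{X}^{(i,j)}$ to $\hat{X}^{(i,j)^+}$ removes exactly the paths whose smallest-in-reverse-lex \reflectbox{L}-turn (equivalently, whose last \reflectbox{L}-turn along the path, since the sequence of \reflectbox{L}-turns along any path is strictly decreasing in reverse lex order) occurs at $(i,j)$. A short check confirms that such a path from $k$ to $\ell$ can exist only when $k<i$ and $\ell<j$, matching the case split of the algorithm.

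For $k<i, \ell<j$, a path $P\colon k \to \ell$ with last \reflectbox{L}-turn at $(i,j)$ decomposes at $(i,j)$. The portion after $(i,j)$ has no further \reflectbox{L}-turns, so it is forced: horizontally to $(i,\ell)$ and then down to column vertex $\ell$, contributing a factor $t_{ij}^{-1} t_{i\ell}$ (the $t_{ij}^{-1}$ from the \reflectbox{L}-turn at $(i,j)$, the $t_{i\ell}$ from the terminal $\Gamma$-turn). The portion up to $(i,j)$, extended by the straight vertical descent from $(i,j)$ to column vertex $j$, gives a path $k \to j$ with all \reflectbox{L}-turns $\succ (i,j)$. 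The assumption $k<i$ ensures this construction is a bijection onto the paths contributing to $\hat{x}^{(i,j)}_{kj}$: indeed any such path has its last $\Gamma$-turn at a row $<i$ and therefore passes vertically through $(i,j)$. Summing weights shows the removed contribution equals $\hat{x}^{(i,j)}_{kj}\cdot t_{ij}^{-1}\cdot t_{i\ell}$. A direct check --- using that any path from row vertex $i$ has all \reflectbox{L}-turns at rows $>i$, which are $\prec(i,j)$ and hence disallowed --- gives $\hat{x}^{(i,j)}_{ij}=t_{ij}$ and $\hat{x}^{(i,j)}_{i\ell}=t_{i\ell}$, so the removed contribution may be rewritten as $\hat{x}^{(i,j)}_{kj}\bigl(\hat{x}^{(i,j)}_{ij}\bigr)^{-1}\hat{x}^{(i,j)}_{i\ell}$, matching the Cauchon update exactly.

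Both $\hat{X}^{(i,j)}$ and the Cauchon sequence $X^{(i,j)}$ thus satisfy the same recurrence, and they coincide at $(i,j)=(1,1)$. Since each step of the recurrence is invertible --- the entries appearing on the right-hand side are precisely those left unchanged by the step --- starting from $(1,1)$ and inverting the algorithm step by step in reverse lexicographic order forces $\hat{X}^{(i,j)} = X^{(i,j)}$ for every $(i,j)$. In particular $X(T) = \hat{X}^{(m,n)} = X^{(m,n)} = X$, so $T$ is the $\Gamma$-scaffolding of $X$. The main obstacle is the weight-preserving bijection of the third paragraph; once set up, the rest is a direct unwinding of definitions.
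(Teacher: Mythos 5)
Your proof is correct and follows the same approach the paper sketches: interpreting the intermediate Cauchon matrices as sums over paths whose \reflectbox{L}-turns are restricted to lie $\succeq (i,j)$, and identifying each Cauchon step as the deletion of paths whose last \reflectbox{L}-turn occurs at $(i,j)$. You have supplied the details (the bijection, the weight identity $w(P)=w(Q)\,t_{ij}^{-1}t_{i\ell}$, and the checks $\hat{x}^{(i,j)}_{ij}=t_{ij}$, $\hat{x}^{(i,j)}_{i\ell}=t_{i\ell}$) that the paper leaves to "a careful analysis."
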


\subsection{\reflectbox{L}-scaffoldings.}

Recall that if $X$ is a TP matrix, then so is its anti-transpose $X^\tau$. Thus $X^\tau$ has a $\Gamma$-scaffolding, say $S$ which may be found by Cauchon's Algorithm. The matrix $T=S^\tau$ will be called the \reflectbox{L}-\emph{scaffolding} of $X$. 

We can modify the concepts above to avoid the intermediate use of the anti-transpose. Let us summarize. For a positive matrix $T$, the directed graph $G^{\reflectbox{\SMALL L}}_{m,n}(T)$ is, roughly speaking, the ``anti-transpose'' of  $G_{n,m}^\Gamma(S)$ with all edge orientations reversed. See Figures~\ref{Le3x3ExampleFigure2} and~\ref{LeScaffoldingExampleFigure} below for examples.

A path $P\colon i\to j$ in $G^{\reflectbox{\SMALL L}}_{m,n}(T)$ is still determined by its alternating sequence of $\reflectbox{L}$-turns and $\Gamma$-turns, but now this sequence begins and ends with $\reflectbox{L}$-turns. Still, if $((i,j_1), (i_2,j_1),\ldots, (i_\ell,j))$ is this sequence of turns, then (as before) define $$w(P)=t_{ij_1}t_{i_2j_1}^{-1}t_{i_3j_3}\cdots t_{i_{\ell},j_{\ell-1}}^{-1}t_{i_{\ell}j}.$$

Now define $X(T)=[x_{ij}]$ by $$x_{ij}=\sum_{\substack{P\colon i\to j,\\P\in G^{\reflectbox{\SMALL L}}_{m,n}(T)}} w(P).$$  Call $T$ the \reflectbox{L}-\emph{scaffolding} of $X(T)$.

\begin{ex}\label{3x3Example2}

Let $T=[t_{ij}]$ be a positive $3\times 3$ matrix. Then $G^{\reflectbox{\SMALL L}}_{3,3}(T)$ is illustrated in Figure~\ref{Le3x3ExampleFigure2} and $X(T)=[x_{ij}]$ is the $3\times 3$ matrix with
\begin{align*} x_{33} & =
 t_{33}+t_{32}t_{22}^{-1}t_{23} + t_{32}t_{12}^{-1}t_{13} + t_{31}t_{21}^{-1}t_{23}+ t_{31}t_{21}^{-1}t_{22}t_{12}^{-1}t_{13}+t_{31}t_{11}^{-1}t_{13},\\
 x_{32} &= t_{32}+t_{31}t_{21}^{-1}t_{22} + t_{31}t_{11}^{-1}t_{12},\\
 x_{23}& = t_{23}+t_{22}t_{12}^{-1}t_{13} + t_{21}t_{11}^{-1}t_{13},\\
 x_{22}&=t_{22}+t_{21}t_{11}^{-1}t_{12},\\
 x_{ij} &= t_{ij}, \textrm{\quad if $i=1$ or $j=1$.}
 \end{align*}
 
  \begin{figure}[ht]
\begin{tikzpicture}[xscale=1.2, yscale=1.2]

\node at (0,4) {$\bullet$};
\node at (0,4.3) {$1$};

\node at (1,4) {$\bullet$};
\node at (1,4.3) {$2$};

\node at (2,4) {$\bullet$};
\node at (2,4.3) {$3$};

\node at (-1,3) {$\bullet$};
\node at (-1.3,3) {$1$};

\node at (-1,2) {$\bullet$};
\node at (-1.3,2) {$2$};

\node at (-1,1) {$\bullet$};
\node at (-1.3,1) {$3$};

\node at (0,1) {$\bullet$};
\node at (-0.3,1.2) {$t_{31}$};
\node at (1,1) {$\bullet$};
\node at (0.7,1.2) {$t_{32}$};
\node at (2,1) {$\bullet$};
\node at (1.7,1.2) {$t_{33}$};

\node at (0,2) {$\bullet$};
\node at (-0.3,2.2) {$t_{21}$};
\node at (1,2) {$\bullet$};
\node at (0.7,2.2) {$t_{22}$};
\node at (2,2) {$\bullet$};
\node at (1.7,2.2) {$t_{23}$};

\node at (0,3) {$\bullet$};
\node at (-0.3,3.2) {$t_{11}$};
\node at (1,3) {$\bullet$};
\node at (0.7,3.2) {$t_{12}$};
\node at (2,3) {$\bullet$};
\node at (1.7,3.2) {$t_{13}$};

\draw [->, thick, black] (-1,2) -- (-0.1,2);
\draw [<-, thick, black] (1.9,2) -- (1,2);
\draw [<-, thick, black] (0.9,2) -- (0,2);

\draw [->, thick, black] (-1,3) -- (-0.1,3);
\draw [<-, thick, black] (1.9,3) -- (1,3);
\draw [<-, thick, black] (0.9,3) -- (0,3);

\draw [->, thick, black] (-1,1) -- (-0.1,1);
\draw [<-, thick, black] (1.9,1) -- (1,1);
\draw [<-, thick, black] (0.9,1) -- (0,1);

\draw [<-, thick, black] (0,2.9) -- (0,2);
\draw [<-, thick, black] (0,1.9) -- (0,1);
\draw [->, thick, black] (0,3) -- (0,3.9);

\draw [<-, thick, black] (1,2.9) -- (1,2);
\draw [<-, thick, black] (1,1.9) -- (1,1);

\draw [->, thick, black] (1,3) -- (1,3.9);
\draw [<-, thick, black] (2,2.9) -- (2,2);
\draw [<-, thick, black] (2,1.9) -- (2,1);

\draw [->, thick, black] (2,3) -- (2,3.9);

\end{tikzpicture}

\caption{The graph $G^{\reflectbox{\SMALL L}}_{3,3}(T)$.} \label{Le3x3ExampleFigure2}
	\end{figure}

\end{ex}

To find the \reflectbox{L}-scaffolding $T$ of a TP matrix $X$, Cauchon's Algorithm proceeds using the ordering $\prec_0$ $[m]\times [n]$ defined by $(i,j)\prec_0 (k,\ell)$ if $j<\ell$, or $j=\ell$ and $i<k$. 

\begin{algorithm}[Cauchon's Algorithm (\reflectbox{L}-version)]

Let $X$ be an $m\times n$ TP matrix. 

\begin{enumerate}

\item Set $X^{(1,1)}=X$. 
\item Suppose $X^{(i,j)}=[x^{(i,j)}_{k\ell}]$ has been defined.
If $(i,j)\prec (m,n)$, then
set $$x_{k\ell}^{(i,j)^+} = \begin{cases} x_{k,\ell}^{(i,j)}-x_{kj}^{(i,j)}\left(x_{ij}^{(i,j)}\right)^{-1}x_{i\ell}^{(i,j)},  & \textrm{if $k>i$ and $\ell>j$,} \\ x_{k\ell}^{(i,j)}, & \textrm{ otherwise.}\end{cases}$$

\item Set $T=X^{(m,n)}$.
\end{enumerate}
\end{algorithm}

\begin{ex} \label{Leexample1}

Once again, let 
$$X =X^{(1,1)}=\begin{bmatrix} 8 & \frac{7}{2} & 1\\ 1 & \frac{1}{2} & 1\end{bmatrix}.$$
Then we have $$X^{(2,1)} = \begin{bmatrix} 8 & \frac{7}{2} & 1\\ 1 & \frac{1}{2} - 1(8)^{-1}\frac{7}{2} & 1-1(8)^{-1}1\end{bmatrix} = \begin{bmatrix} 8 & \frac{7}{2} & 1\\ 1 & \frac{1}{16} & \frac{7}{8}\end{bmatrix}$$ and $$X^{(1,2)}=  \begin{bmatrix} 8 & \frac{7}{2} & 1\\ 1 & \frac{1}{16} & \frac{7}{8} - 1\left(\frac{7}{2}\right)^{-1}\frac{1}{16}\end{bmatrix} = \begin{bmatrix} 8 & \frac{7}{2} & 1\\ 1 & \frac{1}{16} & \frac{6}{7}\end{bmatrix}.$$

As in the $\Gamma$-scaffolding case, this is effectively the final step of Cauchon's Algorithm. The graph $G^{\reflectbox{\SMALL L}}_{m,n}(T)$ is in Figure~\ref{LeScaffoldingExampleFigure}. Notice, for example, that $$\sum_{P\colon 2\to 3} w(P) = \frac{6}{7} + \frac{1}{16} \left(\frac{7}{2}\right)^{-1}1 + 1(8)^{-1} 1 = 1 = x_{23}.$$

 \begin{figure}[ht]
\begin{tikzpicture}[xscale=1.3, yscale=1.3]

\node at (0,3) {$\bullet$};
\node at (0,3.35) {$1$};

\node at (1,3) {$\bullet$};
\node at (1,3.35) {$2$};

\node at (2,3) {$\bullet$};
\node at (2,3.35) {$3$};

\node at (-1,2) {$\bullet$};
\node at (-1.35,2) {$1$};

\node at (-1,1) {$\bullet$};
\node at (-1.35,1) {$2$};

\node at (0,1) {$\bullet$};
\node at (-0.15,1.2) {$1$};
\node at (1,1) {$\bullet$};
\node at (0.85,1.3) {$\frac{1}{16}$};
\node at (2,1) {$\bullet$};
\node at (1.85,1.3) {$\frac{6}{7}$};

\node at (0,2) {$\bullet$};
\node at (-0.15,2.3) {$8$};
\node at (1,2) {$\bullet$};
\node at (0.85,2.3) {$\frac{7}{2}$};
\node at (2,2) {$\bullet$};
\node at (1.85,2.3) {$1$};

\draw [->, thick, black] (-1,2) -- (-0.1,2);
\draw [<-, thick, black] (1.9,2) -- (1,2);
\draw [<-, thick, black] (0.9,2) -- (0,2);

\draw [->, thick, black] (-1,1) -- (-0.1,1);
\draw [<-, thick, black] (1.9,1) -- (1,1);
\draw [<-, thick, black] (0.9,1) -- (0,1);

\draw [<-, thick, black] (0,2.9) -- (0,2);
\draw [<-, thick, black] (0,1.9) -- (0,1);

\draw [<-, thick, black] (1,2.9) -- (1,2);
\draw [<-, thick, black] (1,1.9) -- (1,1);

\draw [<-, thick, black] (2,2.9) -- (2,2);
\draw [<-, thick, black] (2,1.9) -- (2,1);

\end{tikzpicture}

\caption{The graph $G^{\reflectbox{\SMALL L}}_{2,3}(T)$ of Example~\ref{Leexample1}.} \label{LeScaffoldingExampleFigure}
	\end{figure}

\end{ex}

Finally, note that the obvious analogies of Lemma~\ref{lindstrom} and Corollaries~\ref{LindstromCor1} and~\ref{LindstromCor2}  in the setting of \reflectbox{L}-scaffoldings hold true.

\section{Line Insertion in Totally Positive Matrices} \label{InsertionSection}

\subsection{Bordering}
In this section, we reduce the TP line insertion problem to that of finding a strongly positive solution to a certain homogeneous system of linear equations. The key steps in our reduction use the idea of \emph{bordering} a TP matrix, i.e., adding a row or column to the \emph{outside} while retaining total positivity. That one may do this at all is well-known and easy to show. The novelty here is that we use scaffoldings to characterize all possible borderings.

Let $X$ be a TP matrix and suppose we wish to append a new row, with index $0$ say, above $X$. Do this as follows:
\begin{enumerate} \item Find the $\Gamma$-scaffolding $T$ of $X$. 
\item Append above $T$ a new row with only positive entries to form $T^\prime$.
\item The matrix $X(T^\prime)$ is TP and contains $X$ in rows $1,2,\ldots,m$.
\end{enumerate}

This method works since in $G_{m+1,n}^\Gamma(T^\prime)$, no path from a row vertex $i>0$ to column vertex $j$ turns in row $0$. On the other hand, every such path corresponds to a path in $G_{m,n}^\Gamma(T)$. Hence if $X^\prime=X(T^\prime)$, then $X$ is the submatrix of $X^\prime$ formed by rows $1,2,\ldots, m$. Conversely, suppose $X^\prime$ is an $(m+1)\times n$ TP matrix with rows indexed by $0,1,\ldots, n$ such that rows $1$ to $m$ form $X$. When applying Cauchon's Algorithm to $X^\prime$, the steps involving only entries in rows $1$ to $m$ are identical to the application of Cauchon's Algorithm to $X$. Hence the $\Gamma$-scaffolding $T^\prime$ of $X^\prime$ contains the $\Gamma$-scaffolding $T$ of $X$ in rows $1$ to $m$ together with a positive row $0$.

Next, to add a column to the left of $X$, we need only append a strongly positive column to the left of $T$. On the other hand, to append a row beneath $X$ or to the right of $X$, we proceed similarly but using the \reflectbox{L}-scaffolding of $X$ instead.

We now carefully analyze the output of this bordering technique in the case that we are adding a row above $X$. Suppose $T^\prime$ has been formed by appending $\begin{bmatrix} r_1 & r_2 & \cdots & r_n\end{bmatrix}$ above the $\Gamma$-scaffolding $T=[t_{ij}]$ of $X$ to form row $0$.  Let $x_{0j}$ be the $j$th entry in the $0$th row  of $X(T^\prime).$ By definition, $x_{0j}$ is the sum over all paths from row vertex $0$ to column vertex $j$ in $G^\Gamma_{m+1,n}(T^\prime)$. Each path begins with a $\Gamma$-turn at $(0,\ell)$ for some $j\leq \ell\leq n$ and (recalling Notation~\ref{PathNotation}) we conclude \begin{align} x_{0j} &= \sum_{\ell=j}^n \sum_{P:(0,\ell)\to j} w(P)r_\ell.\label{BorderingEquation}\end{align}

The coefficient of $r_\ell$ in Equation~(\ref{BorderingEquation}) may be written using minors of $X$. The key is to notice that if $P\colon (0,\ell)\to j$ is a path in $G_{m+1,1}^\Gamma(T^\prime)$, then $t_{1\ell}w(P)$ is the weight of a path $Q\colon 1\to j$. Moreover, $Q$ contains no turns in any columns from $\ell+1$ to $n$. See Figure~\ref{PvsQFigure}. Conversely, any path $Q: 1\to j$ with no turns in columns $\ell+1$ to $n$ arises in this way.
 \begin{figure}[ht]
\begin{tikzpicture}[xscale=1.3, yscale=1.3]

%
%

\node at (10,2) {$0$};\
\node at (9.8,2) {$\bullet$};
\node at (8,2.3) {$(0,\ell)$};
\node at (7.7,1.5) {$t_{1\ell}$};
\node at (8,2) {$\bullet$};
\node at (6,-0.1) {$j$};
\node at (6.05,0.2) {$\bullet$};
\node at (8,1.5) {$\bullet$};

\draw (8,2) -- (8,1)--(7.3,1)--(7.3,0.6)--(6,0.6)--(6,0.2) ;
\draw [dashed] (9.8,1.5) -- (8.1,1.5) -- (8.1,0.9)--(7.4,0.9)--(7.4,0.5)--(6.1,0.5)--(6.1,0.2) ;

\node at (10,1.5) {$1$};
\node at (9.8,1.5) {$\bullet$};
\node at (9.8,1) {$\bullet$};
\node at (10,1) {$2$};
\node at (10,0.5) {$\vdots$};

\end{tikzpicture}

\caption{Example of a $P\colon (0,\ell) \to  j$ (solid) and the corresponding $Q\colon 1\to j$ (dashed) in $G_{m+1,n}^\Gamma(T^\prime)$ where $w(Q)=t_{1\ell}w(P)$.} \label{PvsQFigure}
	\end{figure}

Therefore, $$\sum_{P:(0,\ell)\to j} w(P) = t_{1\ell}^{-1}\left(\sum_{P:(0,\ell)\to j}t_{1\ell} w(P)\right) = t_{1\ell}^{-1}\left(\sum_{Q_{\leq \ell}:1\to j} w(Q)\right).$$ Applying Corollary~\ref{lindstrom} gives us the following.

\begin{thm} \label{BorderingTheorem1}

Let $X$ be an $m\times n$ TP matrix with $\Gamma$-scaffolding $T$. Suppose $X^\prime$ is an $(m+1)\times n$ TP matrix obtained from $X$ by adding the new row $\begin{bmatrix}x_{01} & x_{02} & \cdots & x_{0n}\end{bmatrix}$ above the first. Then there exists a strongly positive $\vect{r}=\begin{bmatrix} r_1 & \cdots & r_n\end{bmatrix}$ such that $T^\prime = \begin{bmatrix} \vect{r} \\T\end{bmatrix}$ is the $\Gamma$-scaffolding of $X^\prime$ and for all $j\in [n]$, \begin{align} x_{0j} &= \sum_{\ell=j}^n \sum_{\substack{P:(0,\ell)\to j, \\P\in G_{m+1,n}^\Gamma(T^\prime)}} w(P)r_\ell \nonumber \\ &=\sum_{\ell=j}^n \frac{\det X[\{12\cdots\},\{j,\ell+1\cdots\}]}{\det X[\{12\cdots\},\{ \ell,\ell+1\cdots\}]}r_\ell.\label{BorderingTheoremEquation}\end{align} 

Conversely, if we take positive real numbers $r_1,r_2,\ldots,r_n$ and define $x_{0j}$ as in Equation~(\ref{BorderingTheoremEquation}), then the matrix $X^\prime$ obtained from $X$ by adding $$\begin{bmatrix}x_{01} & x_{02} & \cdots & x_{0n}\end{bmatrix}$$ above the first row is totally positive. 
	
\end{thm}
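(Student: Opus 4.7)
The plan is to assemble the statement from the preparatory material immediately preceding the theorem, together with Corollary~\ref{LindstromCor2}.

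\emph{Forward direction.} The bordering discussion at the start of this subsection already produces the desired $\vect r$: applied to $X'$, Cauchon's Algorithm acts on rows $1,\ldots,m$ exactly as it does when applied to $X$ alone (the reverse lexicographic order processes row $0$ last, and the updates within rows $1,\ldots,m$ only reference those same rows), so the $\Gamma$-scaffolding of $X'$ must have the form $T' = \begin{bmatrix}\vect{r}\\ T\end{bmatrix}$, and Theorem~\ref{CauchonTheorem} forces $\vect r$ to be strongly positive. The first equality of Equation~(\ref{BorderingTheoremEquation}) is then just Equation~(\ref{BorderingEquation}). For the second equality I would invoke the bijection indicated in Figure~\ref{PvsQFigure}: multiplying a path $P\colon(0,\ell)\to j$ by $t_{1\ell}$ identifies it with a path $Q\colon 1\to j$ whose first turn lies in a column $\le\ell$, yielding $\sum_{P\colon(0,\ell)\to j}w(P) = t_{1\ell}^{-1}\sum_{Q_{\le\ell}\colon 1\to j}w(Q)$. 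Corollary~\ref{LindstromCor2} converts the inner sum into a ratio of minors of $X$, and the formula $t_{1\ell} = \det X[\{1\cdots\},\{\ell\cdots\}]/\det X[\{2\cdots\},\{\ell+1\cdots\}]$ (the $\ell = j$ specialization of the same corollary, noted after its statement) cancels the residual denominator to leave the stated expression.

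\emph{Converse direction.} Given strongly positive $r_1,\ldots,r_n$, set $T' = \begin{bmatrix}\vect{r}\\ T\end{bmatrix}$. This is a positive matrix, so Corollary~\ref{LindstromCor1} guarantees that $X(T')$ is TP. It then remains to check that the last $m$ rows of $X(T')$ recover $X$ and that its top row agrees with Equation~(\ref{BorderingTheoremEquation}). The first claim is immediate because every path in $G^\Gamma_{m+1,n}(T')$ beginning at a row vertex $i \ge 1$ moves only leftward and downward, hence never visits row $0$ and is already a path of $G^\Gamma_{m,n}(T)$ with the same weight. The second claim is proved by the same chain of identities used in the forward direction, read in the other direction.

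\emph{Main obstacle.} The theorem is largely a repackaging of results already in hand, so I do not expect serious difficulty. The one point demanding care is minor-arithmetic bookkeeping: one must confirm that $X[\{1,2\cdots\},\{\ell,\ell+1\cdots\}]$ and $X[\{1\cdots\},\{\ell\cdots\}]$ denote the same contiguous submatrix (both collapse to rows $\{1,\ldots,m\}$ and columns $\{\ell,\ldots,n\}$ of $X$), and that the algebraic cancellation between $t_{1\ell}^{-1}$ and the denominator produced by Corollary~\ref{LindstromCor2} goes through cleanly in the boundary case $\ell = j$—where the sum $\sum_{Q_{\le j}\colon 1\to j}w(Q)$ contains only the primary path of weight $t_{1j}$—as well as for $\ell > j$.
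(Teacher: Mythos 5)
Your proposal is correct and follows essentially the same route as the paper: the forward direction comes from tracking Cauchon's Algorithm on rows $1,\ldots,m$ of $X'$ together with the path decomposition of Equation~(\ref{BorderingEquation}) and the $P\mapsto Q$ bijection converted via Corollary~\ref{LindstromCor2}, and the converse from Corollary~\ref{LindstromCor1} applied to the stacked scaffolding. The bookkeeping you flag (the $\ell=j$ boundary case and the identification $X[\{1,2\cdots\},\{\ell,\ell+1\cdots\}]=X[\{1\cdots\},\{\ell\cdots\}]$) does go through as you expect.
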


\begin{ex}

Let $T=\begin{bmatrix} 2 & 1 & 1\\ 1&  1 & 1\end{bmatrix}$ so that $X(T)=\begin{bmatrix} 4 & 2 & 1\\ 1 & 1 & 1\end{bmatrix}$. If we add the row $\begin{bmatrix} r_1 & r_2 & r_3 \end{bmatrix} = \begin{bmatrix}  1& 2 &2\end{bmatrix}$ above $T$ to get $T^\prime = \begin{bmatrix} 1& 2 &2\\ 2 & 1 & 1\\ 1 & 1 & 1\end{bmatrix}$, then one may count path weights in $G_{3,3}^\Gamma(T^\prime)$ to obtain $$X(T^\prime) = \begin{bmatrix} 15 & 6 & 2\\ 4 & 2 & 1 \\ 1 & 1 & 1\end{bmatrix}.$$

Notice that \begin{align*} x_{01} &= \frac{\det X[\{1,2\},\{1,2\}]}{\det X[\{1,2\},\{1,2\}]}r_1 + \frac{\det X[\{1,2\},\{1,3\}]}{\det X[\{1,2\},\{2,3\}]}r_2 + \frac{\det X[\{1\},\{1\}]}{\det X[\{1\},\{3\}]}r_3 \\&= 1 + \left(\frac{3}{1}\right) 2+ \left(\frac{4}{1}\right)2 \\&=15,\end{align*} which agrees with the $j=1$ case of Equation~(\ref{BorderingTheoremEquation}).
	
\end{ex}

Similar results can be obtained for the addition of a row or column to the other sides of a TP matrix. For later use, we record this result for the addition of a row below $X$.

\begin{thm} \label{BorderingTheorem2}

Let $X$ be an $m\times n$ TP matrix with \emph{\reflectbox{L}}-scaffolding $T$. Suppose $X^\prime$ is an $(m+1)\times n$ TP matrix obtained from $X$ by adding $\begin{bmatrix}x_{m+1,1} & x_{m+1,2} & \cdots & x_{m+1,n}\end{bmatrix}$ beneath the $m$th row. Then there exists a strongly positive $\vect{q}=\begin{bmatrix} q_1 & \cdots & q_n\end{bmatrix}$ such that $T^\prime = \begin{bmatrix} T \\ \vect{q}\end{bmatrix}$ is the \emph{\reflectbox{L}}-scaffolding of $X^\prime$ and for all $j\in [n]$, \begin{align}x_{m+1,j} &= \sum_{i=1}^j \sum_{\substack{P:(m+1,i)\to j, \\P\in G_{m+1,n}^{\reflectbox{\emph{\SMALL L}}}(T^\prime)}} w(P)q_i \nonumber \\ &=\sum_{i=1}^j \frac{\det X[\{\cdots m-1, m\},\{\cdots i-1, j\}]}{\det X[\{\cdots m-1, m \},\{ \cdots i-1, i\}]}q_i.\label{BorderingTheoremEquation2} \end{align}
Conversely, if we take positive real numbers $q_1,q_2,\ldots,q_n$ and define $x_{m+1,j}$ as in Equation~(\ref{BorderingTheoremEquation2}), then the matrix $X^\prime$ obtained from $X$ by adding $$\begin{bmatrix}x_{m+1,1} & x_{m+1,2} & \cdots & x_{m+1,n}\end{bmatrix}$$ below the $m$th row is totally positive. 
\end{thm}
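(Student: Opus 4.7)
The plan is to mirror the proof of Theorem~\ref{BorderingTheorem1} using the \reflectbox{L}-scaffolding in place of the $\Gamma$-scaffolding, together with the \reflectbox{L}-analogues of Lemma~\ref{lindstrom}, Corollary~\ref{LindstromCor1}, and Corollary~\ref{LindstromCor2}, which were asserted at the end of Section~3 to hold verbatim in this setting.

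First I would invoke the bordering discussion at the start of Section~\ref{InsertionSection}, specifically its ``append a row beneath'' variant. This establishes a bijection between strongly positive rows $\vect{q}=\begin{bmatrix}q_1 & \cdots & q_n\end{bmatrix}$ and TP extensions $X'$ of $X$ obtained by adding a new row $\begin{bmatrix}x_{m+1,1} & \cdots & x_{m+1,n}\end{bmatrix}$ below the $m$th row: $\vect{q}$ corresponds to the extended \reflectbox{L}-scaffolding $T' = \begin{bmatrix}T\\\vect{q}\end{bmatrix}$ via $X'=X(T')$, and conversely the \reflectbox{L}-version of Cauchon's Algorithm applied to any such $X'$ produces a $T'$ of this form because its steps involving only rows $1,\ldots,m$ agree with those applied to $X$. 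This yields the existence of $\vect{q}$. The first equality in Equation~(\ref{BorderingTheoremEquation2}) then follows by observing that each path in $G^{\reflectbox{\SMALL L}}_{m+1,n}(T')$ from row vertex $m+1$ to column vertex $j$ must enter the grid at $(m+1,1)$, travel rightward along row $m+1$, and make its first \reflectbox{L}-turn at some $(m+1,i)$ with $1\leq i\leq j$ before leaving row $m+1$. Grouping by $i$ and pulling out $q_i=t'_{m+1,i}$ gives the stated decomposition.

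For the minor formula, I would exhibit the direct \reflectbox{L}-analogue of the ``$t_{1\ell}w(P)=w(Q)$'' correspondence illustrated in Figure~\ref{PvsQFigure}. Concretely, a path $P\colon(m+1,i)\to j$ beginning with the upward edge $(m+1,i)\to(m,i)$ is uniquely matched with a path $Q\colon m\to j$ in $G^{\reflectbox{\SMALL L}}_{m,n}(T)$ whose first \reflectbox{L}-turn occurs at some column $i_1\geq i$: I prepend to $P$ the row-$m$ segment ``row vertex $m\to(m,1)\to\cdots\to(m,i)$'' and retain $P$'s trajectory from $(m,i)$ onward, inserting a $\Gamma$-turn at $(m,i)$ precisely when $i_1>i$. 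A short case split on $i_1=i$ versus $i_1>i$ shows in both cases that $w(Q)=t_{m,i}\,w(P)$, hence
\[
\sum_{P\colon(m+1,i)\to j} w(P) \;=\; t_{m,i}^{-1} \sum_{Q} w(Q),
\]
where the right-hand sum is over all paths $Q\colon m\to j$ in $G^{\reflectbox{\SMALL L}}_{m,n}(T)$ whose first \reflectbox{L}-turn lies at a column $\geq i$. Applying the \reflectbox{L}-analogue of Corollary~\ref{LindstromCor2} to this sum, together with the \reflectbox{L}-analogue of the identity $t_{m,i}=\det X[\{\cdots m\},\{\cdots i\}]/\det X[\{\cdots m-1\},\{\cdots i-1\}]$ noted at the end of Section~3, collapses the quotient to $\det X[\{\cdots m-1,m\},\{\cdots i-1,j\}]/\det X[\{\cdots m-1,m\},\{\cdots i-1,i\}]$, which is exactly Equation~(\ref{BorderingTheoremEquation2}).

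For the converse, fix any positive $q_1,\ldots,q_n$ and set $T'=\begin{bmatrix}T\\\vect{q}\end{bmatrix}$. The \reflectbox{L}-analogue of Corollary~\ref{LindstromCor1} guarantees that $X(T')$ is TP; its first $m$ rows agree with $X$ since no path originating at a row vertex in $\{1,\ldots,m\}$ can reach row $m+1$, and its $(m+1)$st row is given by Equation~(\ref{BorderingTheoremEquation2}) by the computation above. The main obstacle is the bookkeeping in the bijection of the third step---aligning the convention that paths starting at an internal vertex begin with a ``vertical'' edge in the upward \reflectbox{L}-direction, and verifying $w(Q)=t_{m,i}w(P)$ cleanly in both branches of the case split. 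Everything else is a routine mirror of the $\Gamma$-argument.
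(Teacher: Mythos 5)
Your proposal is correct and follows exactly the approach the paper intends: the paper proves Theorem~\ref{BorderingTheorem1} in detail for the $\Gamma$-scaffolding and then simply states that "similar results can be obtained" for the other sides, which is precisely the mirroring you carry out in the \reflectbox{L}-setting (bijection via the \reflectbox{L}-Cauchon algorithm, grouping paths by first \reflectbox{L}-turn, the $w(Q)=t_{m,i}w(P)$ correspondence, and the \reflectbox{L}-analogue of Corollary~\ref{LindstromCor2}). One small slip: in the case $i_1>i$ you \emph{delete} $P$'s $\Gamma$-turn at $(m,i)$ when forming $Q$ (whereas you insert an \reflectbox{L}-turn at $(m,i)$ when $i_1=i$), not "insert a $\Gamma$-turn"---but the resulting weight identity and conclusion are unaffected.
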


\subsection{Row Insertion}

Let $X$ be an $m\times n$ TP matrix and suppose we wish to insert a new row between rows $k$ and $k+1$ of $X$ while maintaining total positivity. We will index this new row by $k^\prime$. We use the bordering results above to show that the possible inserted rows correspond to strongly positive solutions to a homogeneous system of $2n$ linear equations in $3n$ unknowns. The result will be Theorem~\ref{InsertionTheorem} below but let us derive these equations before stating the theorem. 

We begin by finding the first $n$ equations. Let $X_1$ be the submatrix of $X$ consisting of the first $k$ rows of $X$ and let $X_2$ be the submatrix of $X$ consisting of rows $k+1$ through $m$ of $X$. Obviously $X_1$ and $X_2$ are themselves totally positive. 

The insertion of a new row between rows $k$ and $k+1$ of $X$ is simultaneously the addition of a row beneath $X_1$ and the addition of a row above $X_2$.  If $\vect{r}=\begin{bmatrix} r_1 & \cdots & r_n\end{bmatrix}$ is a strongly positive row we add above the $\Gamma$-scaffolding of $X_2$ and $\vect{q }= \begin{bmatrix} q_1 & \cdots & q_n\end{bmatrix}$ is a strongly positive row we add below the $\reflectbox{L}$-scaffolding of $X_1$, then by Theorems~\ref{BorderingTheorem1} and~\ref{BorderingTheorem2}, it is necessary that for all $j\in [n]$, \begin{multline*}\sum_{\ell=j}^n \frac{\det X_2[\{k+1\,\,k+2\cdots\},\{j\, \ell+1\cdots\}]}{\det X_2[\{k+1\,\, k+2\cdots\},\{\ell\,\, \ell+1\cdots\}]}r_\ell \\=  \sum_{i=1}^j \frac{\det X_1[\{ \cdots k-1\,\, k\},\{\cdots i-1\,\, j\}]}{\det X_1[\{\cdots k-1\,\,k\},\{\cdots i-1\,\,i\}]}q_i.\end{multline*}

Since all minors in the above equation are equal to the corresponding minor in $X$, we may write these equations as \begin{multline} \sum_{\ell=j}^n \frac{\det X[\{k+1\,\,k+2\cdots\},\{j\, \ell+1\cdots\}]}{\det X[\{k+1\,\, k+2\cdots\},\{\ell\,\, \ell+1\cdots\}]}r_\ell \\=  \sum_{i=1}^j \frac{\det X[\{ \cdots k-1\,\, k\},\{\cdots i-1\,\, j\}]}{\det X[\{\cdots k-1\,\,k\},\{\cdots i-1\,\,i\}]}q_i.
 \label{EquationsSet1}	
 \end{multline}

\begin{ex} \label{InsertionExample}

We will keep a running example and show how to insert a row after the second row in $$X=\begin{bmatrix} 6 & 3 & 1\\ 3 & 2 & 1\\1 & 1 & 1\end{bmatrix}.$$

Instead of writing down the Equations~(\ref{EquationsSet1}), we will follow their derivation described above. First we find the \reflectbox{L}-scaffolding $T_1$ of $X_1= \begin{bmatrix} 6 & 3 & 1\\ 3 & 2 & 1\end{bmatrix}$ and the $\Gamma$-scaffolding $T_2$ of $X_2=\begin{bmatrix} 1 & 1 & 1\end{bmatrix}$.

Using Cauchon's Algorithms, we obtain $$T_1= \begin{bmatrix} 6 & 3 & 1\\3 & \frac{1}{2}& \frac{1}{3}\end{bmatrix} \hspace{0.5cm}  \textrm{and}\hspace{0.5cm} T_2=\begin{bmatrix} 1 & 1 & 1\end{bmatrix}.$$ In the notation of Theorems~\ref{BorderingTheorem1} and~\ref{BorderingTheorem2}, we now consider  $G_{1,3}^\Gamma(T_2^\prime)$ and $G_{2,3}^{\SMALL\reflectbox{L}}(T_1^\prime)$ which are drawn Figure~\ref{InsertionExampleFigure1}. 

 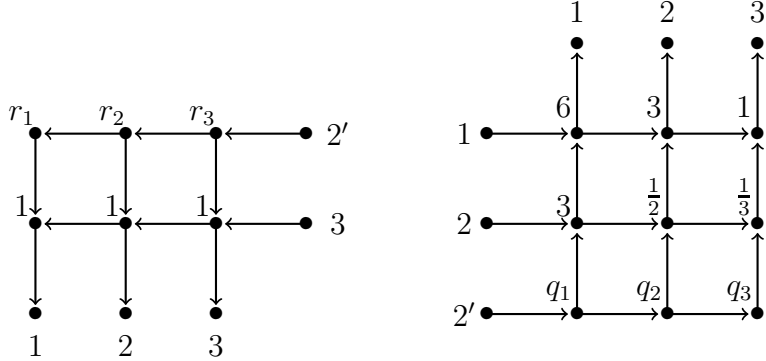
\begin{figure}[ht]
\begin{tikzpicture}[xscale=1.2, yscale=1.2]

\node at (0,0) {$\bullet$};
\node at (0,-0.35) {$1$};

\node at (1,0) {$\bullet$};
\node at (1,-0.35) {$2$};

\node at (2,0) {$\bullet$};
\node at (2,-0.35) {$3$};

\node at (3,2) {$\bullet$};
\node at (3.35,2) {$2^\prime$};

\node at (3,1) {$\bullet$};
\node at (3.35,1) {$3$};

\node at (0,1) {$\bullet$};
\node at (-0.15,1.2) {$1$};
\node at (1,1) {$\bullet$};
\node at (0.85,1.2) {$1$};
\node at (2,1) {$\bullet$};
\node at (1.85,1.2) {$1$};

\node at (0,2) {$\bullet$};
\node at (-0.15,2.2) {$r_1$};
\node at (1,2) {$\bullet$};
\node at (0.85,2.2) {$r_2$};
\node at (2,2) {$\bullet$};
\node at (1.85,2.2) {$r_3$};

\draw [->, thick, black] (3,2) -- (2.1,2);
\draw [->, thick, black] (2,2) -- (1.1,2);
\draw [->, thick, black] (1,2) -- (0.1,2);


\draw [->, thick, black] (3,1) -- (2.1,1);
\draw [->, thick, black] (2,1) -- (1.1,1);
\draw [->, thick, black] (1,1) -- (0.1,1);

\draw [->, thick, black] (0,2) -- (0,1.1);
\draw [->, thick, black] (0,1) -- (0,0.1);

\draw [->, thick, black] (1,2) -- (1,1.1);
\draw [->, thick, black] (1,1) -- (1,0.1);

\draw [->, thick, black] (2,2) -- (2,1.1);
\draw [->, thick, black] (2,1) -- (2,0.1);

%
%
%
%
%
%
%
%
%
%
%
%
%
%
%
%
%
%
%
%


\node at (6,3) {$\bullet$};
\node at (6,3.35) {$1$};

\node at (7,3) {$\bullet$};
\node at (7,3.35) {$2$};

\node at (8,3) {$\bullet$};
\node at (8,3.35) {$3$};

\node at (5,2) {$\bullet$};
\node at (4.75,2) {$1$};

\node at (5,1) {$\bullet$};
\node at (4.75,1) {$2$};

\node at (5,0) {$\bullet$};
\node at (4.75,0) {$2^\prime$};

\node at (6,1) {$\bullet$};
\node at (5.85,1.2) {$3$};
\node at (7,1) {$\bullet$};
\node at (6.85,1.3) {$\frac{1}{2}$};
\node at (8,1) {$\bullet$};
\node at (7.85,1.3) {$\frac{1}{3}$};

\node at (6,2) {$\bullet$};
\node at (5.85,2.3) {$6$};
\node at (7,2) {$\bullet$};
\node at (6.85,2.3) {$3$};
\node at (8,2) {$\bullet$};
\node at (7.85,2.3) {$1$};

\node at (6,0) {$\bullet$};
\node at (5.8,0.25) {$q_1$};
\node at (7,0) {$\bullet$};
\node at (6.8,0.25) {$q_2$};
\node at (8,0) {$\bullet$};
\node at (7.8,0.25) {$q_3$};

\draw [->, thick, black] (5,2) -- (5.9,2);
\draw [->, thick, black] (6,2) -- (6.9,2);
\draw [->, thick, black] (7,2) -- (7.9,2);

\draw [->, thick, black] (5,1) -- (5.9,1);
\draw [->, thick, black] (6,1) -- (6.9,1);
\draw [->, thick, black] (7,1) -- (7.9,1);

\draw [->, thick, black] (5,0) -- (5.9,0);
\draw [->, thick, black] (6,0) -- (6.9,0);
\draw [->, thick, black] (7,0) -- (7.9,0);

\draw  [->, thick, black] (6,0) -- (6,0.9);
\draw  [->, thick, black] (6,1) -- (6,1.9);
\draw  [->, thick, black] (6,2) -- (6,2.9);

\draw  [->, thick, black] (7,0) -- (7,0.9);
\draw  [->, thick, black] (7,1) -- (7,1.9);
\draw  [->, thick, black] (7,2) -- (7,2.9);

\draw  [->, thick, black] (8,0) -- (8,0.9);
\draw  [->, thick, black] (8,1) -- (8,1.9);
\draw  [->, thick, black] (8,2) -- (8,2.9);

%
%
%
%
%
%

\end{tikzpicture}

\caption{Graphs $G_{1,3}^\Gamma(T_2^\prime)$ (left) and $G_{2,3}^{\SMALL \reflectbox{L}}(T_1\prime)$ (right) with new row $2^\prime$ added above and below. }\label{InsertionExampleFigure1} 
	\end{figure}

Our first set of equations then comes from requiring that the sum over paths from row vertex $2^\prime$ to column vertex $j$ are the same for each $j$. This yields the equations \begin{align} r_1+r_2+r_2 &= q_1 \label{InsertionExampleEqn1} \\
 r_2+r_3 &= \frac{2}{3}q_1+q_2 \\ r_3&=\frac{1}{3}q_1+q_2+q_3 \label{InsertionExampleEqn3}.	
 \end{align}

\end{ex}

Now we find the next $n$ equations. The problem is that while a strongly positive solution to Equations~(\ref{EquationsSet1}) is necessary, it is not sufficient. Indeed, if we do take a strongly positive solution and form $X^\prime$ by inserting the row  $k^\prime$ so determined, we are not guaranteed that minors of $X^\prime$ involving row $k^\prime$ and rows in both $X_1$ and $X_2$ are positive.

The question then is which strongly positive solutions of the Equations~(\ref{EquationsSet1}) guarantee total positivity of $X^\prime$? 

Consider the application of Cauchon's Algorithm to $X^\prime$ up through step $(k+1,1)$. The resulting matrix $(X^\prime)^{(k+1,1)}$ has the block form $$(X^\prime)^{(k+1,1)} = \begin{bmatrix} \hat{X} \\ \mathbf{r}\\ T_2\end{bmatrix}$$ where note that $\hat{X}$ also equals the first $k$ rows of $X^{(k+1,1)}$. Equivalently, $\hat{X}$ is the TP matrix whose $\Gamma$-scaffolding consists of the first $k$ rows of the $\Gamma$-scaffolding of $X$.

Since $X_2$ is TP, we already know that $T_2$ is positive. By Theorem~\ref{CauchonTheorem} it follows that $X^\prime$ is TP if and only if $$\begin{bmatrix} \hat{X} \\ \mathbf{r}\end{bmatrix}$$ is TP. 

Now, since $X$ is TP so is $\hat{X}$, again by Theorem~\ref{CauchonTheorem}. Therefore by Theorem~\ref{BorderingTheorem2}, $$\begin{bmatrix} \hat{X} \\ \mathbf{r}\end{bmatrix}$$ is TP if and only if there is a strongly positive $\vect{s} = \begin{bmatrix} s_1 & \cdots & s_n\end{bmatrix}$ with \begin{align} r_j &= \sum_{i=1}^j \frac{\det \hat{X}[\{\cdots k-1, k\},\{\cdots i-1, j\}]}{\det \hat{X}[\{\cdots k-1, k \},\{ \cdots i-1, i\}]}s_i.\label{EquationsSet2}\end{align}

\begin{ex}

We continue Example~\ref{InsertionExample}. For the second set of equations we need $\hat{X}$. This may come from applying the first three steps of Cauchon's Algorithm to $X$ and extracting the first two rows, or taking the $\Gamma$-scaffolding $T$ of $X$ (which is the $3\times 3$ matrix with every entry $1$) and setting $\hat{X}$ to be the matrix whose $\Gamma$-scaffolding are the first two rows of $T$. Either way, one will find $$\hat{X}=\begin{bmatrix} 3 & 2 & 1\\ 1 & 1 & 1 \end{bmatrix}.$$ 

The \reflectbox{L}-scaffolding $\hat{T}$ of $\hat{X}$ is $$\hat{T}=\begin{bmatrix} 3 & 2 & 1\\ 1 & \frac{1}{3}&\frac{1}{2}\end{bmatrix}.$$ The graph $G_{2,3}^{\reflectbox{\SMALL L}}(\hat{T}^\prime)$ is illustrated in Figure~\ref{InsertionExampleFigure2}.
 \begin{figure}[ht]
\begin{tikzpicture}[xscale=1.2, yscale=1.2]

\node at (6,3) {$\bullet$};
\node at (6,3.35) {$1$};

\node at (7,3) {$\bullet$};
\node at (7,3.35) {$2$};

\node at (8,3) {$\bullet$};
\node at (8,3.35) {$3$};

\node at (5,2) {$\bullet$};
\node at (4.75,2) {$1$};

\node at (5,1) {$\bullet$};
\node at (4.75,1) {$2$};

\node at (5,0) {$\bullet$};
\node at (4.75,0) {$2^\prime$};

\node at (6,1) {$\bullet$};
\node at (5.85,1.2) {$1$};
\node at (7,1) {$\bullet$};
\node at (6.85,1.3) {$\frac{1}{3}$};
\node at (8,1) {$\bullet$};
\node at (7.85,1.3) {$\frac{1}{2}$};

\node at (6,2) {$\bullet$};
\node at (5.85,2.3) {$3$};
\node at (7,2) {$\bullet$};
\node at (6.85,2.3) {$2$};
\node at (8,2) {$\bullet$};
\node at (7.85,2.3) {$1$};

\node at (6,0) {$\bullet$};
\node at (5.8,0.2) {$s_1$};
\node at (7,0) {$\bullet$};
\node at (6.8,0.2) {$s_2$};
\node at (8,0) {$\bullet$};
\node at (7.8,0.2) {$s_3$};

\draw [->, thick, black] (5,2) -- (5.9,2);
\draw [->, thick, black] (6,2) -- (6.9,2);
\draw [->, thick, black] (7,2) -- (7.9,2);

\draw [->, thick, black] (5,1) -- (5.9,1);
\draw [->, thick, black] (6,1) -- (6.9,1);
\draw [->, thick, black] (7,1) -- (7.9,1);

\draw [->, thick, black] (5,0) -- (5.9,0);
\draw [->, thick, black] (6,0) -- (6.9,0);
\draw [->, thick, black] (7,0) -- (7.9,0);

\draw  [->, thick, black] (6,0) -- (6,0.9);
\draw  [->, thick, black] (6,1) -- (6,1.9);
\draw  [->, thick, black] (6,2) -- (6,2.9);

\draw  [->, thick, black] (7,0) -- (7,0.9);
\draw  [->, thick, black] (7,1) -- (7,1.9);
\draw  [->, thick, black] (7,2) -- (7,2.9);

\draw  [->, thick, black] (8,0) -- (8,0.9);
\draw  [->, thick, black] (8,1) -- (8,1.9);
\draw  [->, thick, black] (8,2) -- (8,2.9);

\end{tikzpicture}

\caption{The \reflectbox{L}-scaffolding of $\hat{X}$ with new row added below.}\label{InsertionExampleFigure2} 
	\end{figure}
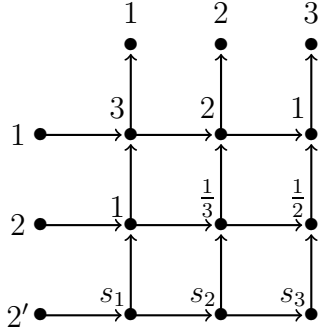
	
Therefore, we seek a strongly positive solution to Equations~(\ref{InsertionExampleEqn1})-(\ref{InsertionExampleEqn3}) such that there are positive  $s_1,s_2,s_3$ with \begin{align*} r_1 &= s_1\\
 r_2 &= s_1 + s_2\\ r_3 &= s_1+2s_2+s_3.
 \end{align*}

For example, one strongly positive solution is \begin{eqnarray*}
r_1=1, & r_2=2, & r_3=6,\\
q_1=9, & q_2=2, & q_3=1,\\
s_1=1, & s_2=1, & s_3=3.	
\end{eqnarray*}
This solution gives the TP matrix $$X^\prime = \begin{bmatrix} 6 & 3 & 1\\ 3 & 2 & 1\\ 9 & 8 & 6\\ 1 & 1 & 1\end{bmatrix}.$$

\end{ex}

Our discussion has led us to the following result.

\begin{thm} \label{InsertionTheorem}

Let $X$ be an $m\times n$ TP matrix. Let $\hat{X}$ be the TP matrix whose $\Gamma$-scaffolding consists of the first $k$ rows of the $\Gamma$-scaffolding of $X$. 

Any insertion of a new row between rows $k$ and $k+1$ of $X$ that maintains total positivity corresponds to a strongly positive solution $$\begin{bmatrix} r_1 & \cdots & r_n & q_1 & \cdots & q_n & s_1 & \cdots & s_n\end{bmatrix}$$ to the following  system of linear equations:
\begin{multline}\label{EquationsSet3} \sum_{\ell=j}^n \frac{\det X[\{k+1\,\,k+2\cdots\},\{j\, \ell+1\cdots\}]}{\det X[\{k+1\,\, k+2\cdots\},\{\ell\,\, \ell+1\cdots\}]}r_\ell \\=  \sum_{i=1}^j \frac{\det X[\{ \cdots k-1\,\, k\},\{\cdots i-1\,\, j\}]}{\det X[\{\cdots k-1\,\,k\},\{\cdots i-1\,\,i\}]}q_i
 \end{multline}
\begin{align} r_j &= \sum_{i=1}^j \frac{\det \hat{X}[\{\cdots k-1, k\},\{\cdots i-1, j\}]}{\det \hat{X}[\{\cdots k-1, k \},\{ \cdots i-1, i\}]}s_i,\label{EquationsSet4}\end{align}
where $j$ runs from $1$ to $n$ in both~(\ref{EquationsSet3}) and~(\ref{EquationsSet4}). 

If $\begin{bmatrix} x_{k^\prime 1} & \cdots & x_{k^\prime n}\end{bmatrix}$ is the row inserted, then $x_{k^\prime j}$ equals the common value in the $j$th equation of~(\ref{EquationsSet3}).
\end{thm}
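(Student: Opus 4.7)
The plan is to encode the insertion as two simultaneous bordering problems and then force total positivity via an additional constraint read off from Cauchon's algorithm. Write $X_1$ for the first $k$ rows and $X_2$ for the last $m-k$ rows of $X$; both are TP.

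For necessity of~(\ref{EquationsSet3}), note that the inserted row lies above $X_2$, so Theorem~\ref{BorderingTheorem1} applied to $X_2$ produces a strongly positive $\vect{r}$ expressing $x_{k^\prime j}$ as the left-hand side of~(\ref{EquationsSet3})---the minors of $X_2$ in that formula coincide with the corresponding minors of $X$, since they only involve rows $\geq k+1$. Symmetrically, Theorem~\ref{BorderingTheorem2} applied to $X_1$ gives a strongly positive $\vect{q}$ for which $x_{k^\prime j}$ equals the right-hand side of~(\ref{EquationsSet3}). Equating the two expressions yields~(\ref{EquationsSet3}). This system is necessary but not sufficient, as nothing in it controls minors that involve the inserted row together with rows from \emph{both} $X_1$ and $X_2$.

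To promote necessity to sufficiency and derive~(\ref{EquationsSet4}), I would trace Cauchon's $\Gamma$-algorithm on $X^\prime$ through all steps whose pivot row lies strictly below the inserted one. Since step $(i,j)$ modifies only entries $(p,\ell)$ with $p<i$ and $\ell<j$, these initial steps coincide with Cauchon applied to the bottom $m-k+1$ rows of $X^\prime$---namely $X_2$ bordered above by the inserted row---and Theorem~\ref{BorderingTheorem1} identifies the output in those rows as $T_2$ beneath $\vect{r}$. Simultaneously, the same steps update the top $k$ rows using pivots drawn entirely from (processed versions of) the $X_2$-part, never from the inserted row, so they act identically to the corresponding steps of Cauchon on $X$ and leave the first $k$ rows equal to $\hat{X}$. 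This produces the block form displayed in the excerpt. The remaining Cauchon steps never revisit the already-positive bottom block, so by Theorem~\ref{CauchonTheorem} $X^\prime$ is TP if and only if $\hat{X}$ bordered below by $\vect{r}$ is TP. A final application of Theorem~\ref{BorderingTheorem2} to this last condition produces a strongly positive $\vect{s}$ satisfying~(\ref{EquationsSet4}).

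The converse is immediate by reversal: given a strongly positive triple $(\vect{r},\vect{q},\vect{s})$ solving~(\ref{EquationsSet3}) and~(\ref{EquationsSet4}), equation~(\ref{EquationsSet4}) together with Theorem~\ref{BorderingTheorem2} makes $\hat{X}$ bordered below by $\vect{r}$ TP, and Theorem~\ref{BorderingTheorem1} then appends $T_2$ beneath as the remainder of the scaffolding to yield a TP $X^\prime$; the $j$th entry of the inserted row is the common value of the two sides of~(\ref{EquationsSet3}) by construction. The chief technical obstacle is justifying the block decomposition of the intermediate Cauchon matrix---especially the identification of its top $k$ rows with $\hat{X}$---which rests on the fact that during this initial phase the pivots are drawn from a (processed) copy of $X_2$ and are blind to the inserted row, so the top rows evolve exactly as they would under the corresponding initial steps of Cauchon on $X$.
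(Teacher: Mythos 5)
Your proposal is correct and follows essentially the same route as the paper's derivation (which appears in the text of Section~4.2 immediately preceding the statement of Theorem~\ref{InsertionTheorem}): split $X$ into $X_1$ and $X_2$, derive~(\ref{EquationsSet3}) from Theorems~\ref{BorderingTheorem1} and~\ref{BorderingTheorem2}, then run Cauchon's Algorithm on $X^\prime$ through the pivots in the $X_2$ block to expose the block form $\bigl[\begin{smallmatrix}\hat{X}\\\vect{r}\\T_2\end{smallmatrix}\bigr]$, reduce total positivity of $X^\prime$ to that of $\bigl[\begin{smallmatrix}\hat{X}\\\vect{r}\end{smallmatrix}\bigr]$ via Theorem~\ref{CauchonTheorem}, and finish with another application of Theorem~\ref{BorderingTheorem2} to get~(\ref{EquationsSet4}). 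You actually supply somewhat more justification than the paper does for why the top $k$ rows of the intermediate matrix agree with $\hat{X}$ (namely, that the pivot rows during this phase lie entirely in the $X_2$ block and their entries evolve independently of the inserted row), which is the point the paper states but leaves implicit.
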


That there exist strongly positive solutions to the system of equations in Theorem~\ref{InsertionTheorem} may already be inferred by the main result of~\cite{johnsonsmith}. However, in the spirit of independence and in keeping with the theme of the present work, we provide an alternative approach.

\begin{thm} \label{ExistenceTheorem}
There always exist strongly positive solutions to the system of equations in the statement of Theorem~\ref{InsertionTheorem}.

\end{thm}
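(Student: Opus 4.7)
The plan is to parameterize solutions by $\vect{s}$. Choose any strongly positive $\vect{s}\in\mathbb{R}^n$ and define $r_1,\ldots,r_n$ via equation~(\ref{EquationsSet4}). Since each coefficient there is a ratio of positive minors of the TP matrix $\hat{X}$, we get $\vect{r}>0$ immediately. Moreover,~(\ref{EquationsSet4}) is precisely the conclusion of Theorem~\ref{BorderingTheorem2} applied to $\hat{X}$, so its converse implies the bordered matrix $\begin{bmatrix}\hat{X}\\\vect{r}\end{bmatrix}$ is TP.

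Use this $\vect{r}$ to define the candidate inserted row $\vect{x}_{k'}$ as the common value of the left-hand side of~(\ref{EquationsSet3}) for each $j$. By the converse of Theorem~\ref{BorderingTheorem1} applied to $X_2$ with upper-border $\vect{r}$, the matrix $\begin{bmatrix}\vect{x}_{k'}\\X_2\end{bmatrix}$ is TP and has $\Gamma$-scaffolding $\begin{bmatrix}\vect{r}\\T_2\end{bmatrix}$. Now form $X':=\begin{bmatrix}X_1\\\vect{x}_{k'}\\X_2\end{bmatrix}$ as an $(m+1)\times n$ matrix, indexing its rows by $1<\cdots<k<k'<k+1<\cdots<m$. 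If $X'$ is TP, then $\begin{bmatrix}X_1\\\vect{x}_{k'}\end{bmatrix}$ is TP as a submatrix, and Theorem~\ref{BorderingTheorem2} applied to $X_1$ with lower-border $\vect{x}_{k'}$ yields a strongly positive $\vect{q}$ satisfying~(\ref{EquationsSet3}); then $(\vect{r},\vect{q},\vect{s})$ is the required strongly positive solution.

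Thus the key step is showing $X'$ is TP. I would prove this by reprising the argument preceding Theorem~\ref{InsertionTheorem}: Cauchon's Algorithm on $X'$ through step $(k+1,1)$ produces the block matrix $(X')^{(k+1,1)}=\begin{bmatrix}\hat{X}\\\vect{r}\\T_2\end{bmatrix}$. Indeed, each update $x_{r,c}^+ = x_{r,c} - x_{r,j}x_{i,j}^{-1}x_{i,c}$ performed during a step with row label $i\geq k+1$ references only entries in row $r$ (being modified) and row $i$ (the pivot row, which has label $\geq k+1 > k'$). Consequently updates to any row with label $\leq k$ or in $\{k+1,\ldots,i-1\}$ never reference row $k'$, and these rows evolve exactly as under Cauchon applied to $X$ alone, producing $\hat{X}$ and $T_2$ respectively. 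Row $k'$ itself evolves based only on row $k'$ and entries in the $X_2$-block, becoming precisely $\vect{r}$ because $\vect{x}_{k'}$ was defined so that $\begin{bmatrix}\vect{r}\\T_2\end{bmatrix}$ is the $\Gamma$-scaffolding of $\begin{bmatrix}\vect{x}_{k'}\\X_2\end{bmatrix}$. The remaining Cauchon steps on $X'$ operate entirely within the top $k+1$ rows, which form the TP matrix $\begin{bmatrix}\hat{X}\\\vect{r}\end{bmatrix}$, so Theorem~\ref{CauchonTheorem} ensures they produce positive matrices. Hence every step of Cauchon on $X'$ produces a positive matrix, equivalent to $X'$ being TP. The main technical challenge is this block-form identification for $(X')^{(k+1,1)}$, which requires careful bookkeeping under the nonstandard ordering on row labels of $X'$ and exploits the local nature of the Cauchon update formula.
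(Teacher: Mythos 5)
Your proof is correct and takes a genuinely different, and in fact stronger, route than the paper's. The paper fixes $s_1=\cdots=s_{n-1}=0$ and a particular $s_n$ to extract the coefficient of $s_n$ in each $q_j$, identifies it by induction as $\overline{t}_{k+1,j}/x_{k+1,n}>0$ (where $\overline{T}$ is the \reflectbox{L}-scaffolding of $[X_1;\vect{x}_{k+1}]$), and then concludes by taking $s_n$ large enough. You instead show that \emph{every} strongly positive $\vect{s}$ produces a strongly positive $\vect{q}$, by constructing the candidate $X'$ directly and proving it is TP. The mechanism is the block identity $(X')^{(k+1,1)}=\bigl[\hat{X};\vect{r};T_2\bigr]$, which the paper already establishes in the derivation of Theorem~\ref{InsertionTheorem}; you simply run it forward. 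Your version buys a cleaner statement: the map $\vect{s}\mapsto\vect{x}_{k'}$ parameterizes the entire set of admissible insertions by all of $\mathbb{R}_{>0}^n$, rather than merely producing one. The paper's version is shorter because it avoids re-verifying that Cauchon's Algorithm runs through on $X'$.

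One point to tighten: you write that positivity of every Cauchon step is ``equivalent'' to $X'$ being TP and cite Theorem~\ref{CauchonTheorem}, but that theorem gives only the forward implication (TP implies positivity at every step). What you actually need is the converse, namely that if Cauchon's Algorithm on $X'$ runs without zero pivots and terminates in a positive matrix $T'$, then $X'=X(T')$ and hence $X'$ is TP by Corollary~\ref{LindstromCor1}. This is correct, and it is implicit in the paper's description of Cauchon's Algorithm as a step-by-step reversal of the path-sum construction, but it is not literally Theorem~\ref{CauchonTheorem}; you should invoke the inverse-of-path-sum reasoning (or Corollary~3.9 together with Corollary~\ref{LindstromCor1}) rather than the theorem alone. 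With that citation repaired, the argument is sound; the block-form bookkeeping you flag as the technical challenge is handled correctly by the observation that the Cauchon update at pivot $(i,j)$ with $i\geq k+1$ references only row $i$ and the row being modified, so the presence of row $k'$ is invisible to the evolution of the other rows and row $k'$ evolves exactly as inside $[\vect{x}_{k'};X_2]$.
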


\begin{proof}

For any choice of positive $s_1,\ldots, s_n$, each $r_j$ is positive. So we only need to show that there always exists an appropriate choice of positive $s_1,\ldots, s_n$ so that each $q_1,\ldots,q_n$ is positive. 

To do this, we show that for each $j$, the expression for $q_j$ as a linear combination of $s_1,\ldots, s_n$ is such that the coefficient of $s_n$ is positive. So for any choice of positive $s_1,\ldots, s_{n-1}$, we then just choose $s_n$ large enough to make $q_j>0$. 

To find the coefficient of $s_n$ in the expression for $q_j$, we set $s_1=\cdots= s_{n-1}=0$ and, for convenience,  $s_n=x_{k+1,n}>0$. From Equations~(\ref{EquationsSet4}) we have $r_1=\cdots = r_{n-1}=0$ and $r_n=s_n=x_{k+1,n}$. 

On the other hand, let $\overline{X} = \begin{bmatrix} X_1 \\ \vect{x}_{k+1}\end{bmatrix}$, where, as above, $X_1$ consists of the first $k$ rows of $X$ and $x_{k+1}$ is the $(k+1)$st row of $X$. Let $\overline{T}=[\overline{t}_{ij}]$ be the \reflectbox{L}-scaffolding of $\overline{X}$. We prove by induction on $j$ that with the above choice of $s_1,\ldots, s_n$, one obtains $$q_j=\overline{t}_{k+1,j}.$$ In other words, in the expression for $q_j$ as a linear combination of $s_1,\ldots,s_n$, the coefficient of $s_n$ is $\frac{\overline{t}_{k+1,j}}{x_{k+1,n}}>0,$ which will complete the proof.

To implement our strategy, we use the first form of the equation in Theorem~\ref{BorderingTheorem2} to replace the right sides in Equations~(\ref{EquationsSet3}). We obtain $$\frac{x_{k+1,j}}{x_{k+1,n}}r_n = \sum_{i=1}^j \sum_{\substack{P:(k+1, i)\to j \\ P\in G_{k+1,n}^{\reflectbox{\SMALL L}}(\overline{T})}} w(P)q_i,$$ and since $r_n=x_{k+1,n}$, $$x_{k+1,j} = \sum_{i=1}^j \sum_{\substack{P:(k+1, i)\to j \\ P\in G_{k+1,n}^{\reflectbox{\SMALL L}}(\overline{T})}} w(P)q_i.$$

For $j=1$, this reduces to simply $x_{k+1,1}=q_1$. Since in the $\reflectbox{L}$-scaffolding of $\overline{X}$, one has $x_{k+1,1}=\overline{t}_{k+1,1}$, our assertion holds in this case.

Now suppose $q_i= \overline{t}_{k+1,i}$ for $1\leq i\leq j-1$. Then if $P:(k+1,i)\to j$ is a path in  $G_{k+1,n}^{\reflectbox{\SMALL L}}(\overline{T})$ for some $1\leq i\leq j-1$, then $w(P)q_i = \overline{t}_{k+1,i}w(P)$ is the weight of a path $Q:k+1\to j$, and conversely every such path arises in this way. Thus $$\sum_{i=1}^{j-1} \sum_{\substack{P:(k+1, i)\to j \\ P\in G_{k+1,n}^{\reflectbox{\SMALL L}}(\overline{T})}} w(P)q_i = \sum_{i=1}^j \sum_{\substack{P:(k+1, i)\to j \\ P\in G_{k+1,n}^{\reflectbox{\SMALL L}}(\overline{T})}} \overline{t}_{k+1,i}w(P)$$ is precisely the sum of the weights of all paths from $k+1$ to $i$ except the primary path from $k+1$ to $i$. Since $x_{k+1,i}$ is the sum of the weights of \emph{all} paths from $k+1$ to $i$, it follows that we must have $q_j=\overline{t}_{k+1,j}$. As explained above, this suffices to complete the proof.

\end{proof}
	
Practically, finding a line insertion may now be done using standard linear programming techniques or even Dines' old algorithm~\cite{dines}. One may also easily extract an algorithm from the above proof: First Choose any positive $s_1,\ldots, s_{n-1}$, then write each $q_j$ in terms of $s_1,\ldots, s_n$ using the above equations, and finally choose $s_n$ large enough to make each $q_j>0$.

We hope the concept of scaffoldings will prove a fruitful method for other TP completion problems. For example, many of the main results of~\cite{fallatjohnson} may also be recovered using this approach, and in a sequel paper we will address the echelon completion problem described in~\cite{johnsonwei}.

\bibliography{casteels1}
\bibliographystyle{amsplain}

\end{document}